\documentclass[a4paper,10pt]{scrartcl}
\usepackage{geometry}
\usepackage[utf8]{inputenc}
\usepackage[T1]{fontenc}
\usepackage{graphicx}
\usepackage{amsmath, amssymb, amstext}
\usepackage{amsthm}
\usepackage{enumitem}
\usepackage{tikz}
\usepackage{hhline, braket}
\usepackage{mathtools}
\pgfdeclarelayer{edgelayer}
\pgfdeclarelayer{nodelayer}
\pgfsetlayers{edgelayer,nodelayer,main}
\tikzstyle{rn}=[circle,fill=white,draw=black,line width=0.4 pt]

\usepackage{pgfplots}
\usepackage{float}
\usepackage[colorlinks]{hyperref}
\definecolor{lightblue}{rgb}{0.5,0.5,1.0}
\definecolor{darkred}{rgb}{0.5,0,0}
\definecolor{darkgreen}{rgb}{0,0.5,0}
\definecolor{darkblue}{rgb}{0,0,0.5}
\hypersetup{colorlinks,linkcolor=darkred,filecolor=darkgreen,urlcolor=darkred,citecolor=darkblue}
\usepackage[titlenumbered, linesnumbered, ruled]{algorithm2e}

\usetikzlibrary{patterns,arrows,decorations.pathreplacing,decorations.pathmorphing}

\geometry{a4paper}

\hyphenation{}

\theoremstyle{definition}
\newtheorem{theorem}{Theorem}[section]
\newtheorem*{theorem*}{Theorem}
\newtheorem*{maintheoremrestated}{Theorem~\ref{thm:characterization-two-factor-injective} (restated)}

\newtheorem{lemma}[theorem]{Lemma}
\newtheorem{definition}[theorem]{Definition}
\newtheorem{corollary}[theorem]{Corollary}
\newtheorem{example}[theorem]{Example}

\definecolor{darkred}{rgb}{0.5,0,0}

\numberwithin{equation}{section}
\numberwithin{figure}{section}

\newcommand{\leftmset}{\{\!\!\{}
\newcommand{\rightmset}{\}\!\!\}}

\title{Subgroups of 3-factor direct products}
\author{Daniel Neuen and Pascal Schweitzer\\ 
RWTH Aachen University\\
\texttt{\{neuen,schweitzer\}@informatik.rwth-aachen.de}
}

\begin{document}

\maketitle

\begin{abstract}
 Extending Goursat's Lemma we investigate the structure of subdirect products of 3-factor direct products. We give several example constructions and then provide a structure theorem showing that every such group is essentially obtained by a combination of the constructions. The central observation in this structure theorem is that the dependencies among the group elements in the subdirect product that involve all three factors are of Abelian nature. In the spirit of Goursat's Lemma, for two special cases, we derive correspondence theorems between data obtained from the subgroup lattices of the three factors (as well as isomorphism between arising factor groups) and the subdirect products.
 Using our results we derive an explicit formula to count the number of subdirect products of the direct product of three symmetric groups.
\end{abstract}
\section{Introduction}

The lemma of Goursat~\cite{MR1508819} is a classic result of group theory that characterizes the subgroups of a direct product of two groups~$G_1\times G_2$. A version of the lemma also provides means to describe the subgroups of~$G_1\times G_2$ by inspecting the subgroup lattices of~$G_1$ and~$G_2$ and considering isomorphisms between arising factor groups.

In an expository article, Anderson and Camillo~\cite{MR2508141} demonstrate for example the applicability of Goursat's lemma to determine normal subgroups of~$G_1\times G_2$, to count the number of subgroups of~$S_3\times S_3$, and to prove the Zassenhaus Lemma. They also describe how Goursat's Lemma can be stated in the context of rings, ideals, subrings and in modules. The Lemma itself can also be found in various introductory algebra and group theory texts (e.g.,~\cite{MR0414669,MR1878556}).

While Goursat's Lemma applies to subgroups of the direct product of two groups, in this work we are concerned with subgroups of the direct product of three groups.

It seems that there is no straightforward generalization to three factors. Indeed, Bauer, Sen, and Zvengrowski~\cite{GenGoursat} develop a generalization to an arbitrary finite number of factors by devising a non-symmetric version of Goursat's lemma for two factors that can then be applied recursively. A more category theory focused approach is taken by Gekas. However no simple correspondence theorem between the subdirect products of 3-factor direct products and data depending on the sublattice of the subgroups of the factors and isomorphisms between them is at hand. In fact, in~\cite{GenGoursat} the authors exhibit two Abelian examples that stand in the way of such a correspondence theorem by sharing the various characteristic subgroups and isomorphisms between them and yet being distinct. Both these papers are able to recover several identities provided by Remak~\cite{MR1581299} who is explicitly concerned with 3-factor subdirect products.

In this paper we analyze the structure of subdirect products of 3-factor direct products. To this end we give several example constructions of such groups and then provide a structure theorem showing that every such group is essentially obtained by a combination of the constructions. The central observation in this structure theorem is that the dependencies among the group elements in the subdirect product that involve all three factors are of Abelian nature.
We call a subdirect product of~$G_1\times G_2\times G_3$ 2-factor injective if each of the three projections onto two factors is injective. By taking suitable quotients, it is possible to restrict our investigations to 2-factor injective subdirect products, for which we obtain the following theorem.

\begin{theorem}[Characterization of 2-factor injective subdirect products of 3-factor products]
\label{thm:characterization-two-factor-injective}
Let $\Delta \leq G_1\times G_2\times G_3$ be a 2-factor injective subdirect product. Then there is a normal subgroup of~$H\trianglelefteq \Delta$ with~$[\pi_i(\Delta) \colon \pi_i(H)] = [\Delta \colon H]$ for~$i\in\{1,2,3\}$ and~$H$ is isomorphic to a group of the following form: there are three finite groups~$H_1,H_2,H_3$ that all have an Abelian subgroup~$M$ contained in their the center such that~$H$ is isomorphic to the factor group of 
\[\{((h_2,{h'_3}^{-1}),(h_3,{h'_1}^{-1}),(h_1,{h'_2}^{-1}))\mid  h_i,h'_i\in H_i, h_i {h'_i}^{-1} \in M, h_1 {h'_1}^{-1}h_2 {h'_2}^{-1}h_3 {h'_3}^{-1}=1\},\] 
by the normal subgroup~$\{((m_1,m_1),(m_2,m_2),(m_3,m_3))\mid m_i\in M\}$. 
\end{theorem}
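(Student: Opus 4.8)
The plan is to build the required normal subgroup from the coordinate kernels and then to locate the Abelian ``triple overlap'' that the displayed construction encodes. For $i\in\{1,2,3\}$ set $L_i:=\ker(\pi_i|_\Delta)=\{g\in\Delta : \pi_i(g)=1\}$, a normal subgroup of $\Delta$, and for $i\neq j$ put $P_{ij}:=\pi_i(L_j)\trianglelefteq G_i$ (using that $\pi_i(\Delta)=G_i$). First I would extract the consequences of 2-factor injectivity. Since $\pi_{jk}$ is injective, $L_j\cap L_k=\ker(\pi_{jk}|_\Delta)=1$, and likewise every element of $\Delta$ nontrivial in only one coordinate is trivial. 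As $L_j,L_k$ are normal with $L_j\cap L_k=1$, they commute elementwise, so $[L_j,L_k]=1$ for $j\neq k$. Moreover, for $\ell\in L_i$ the pair $(\pi_j(\ell),\pi_k(\ell))$ determines $\ell$, and both $\pi_j|_{L_i}$ and $\pi_k|_{L_i}$ are injective (their kernels lie in $L_i\cap L_j=1$, resp. $L_i\cap L_k=1$); hence $L_i\cong P_{ji}\cong P_{ki}$ via these projections.

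Next I would show that $H:=L_1L_2L_3$ realizes the first assertion. It is normal in $\Delta$, and since $\pi_i(L_i)=1$ one gets $\pi_i(H)=P_{ij}P_{ik}$ for $\{i,j,k\}=\{1,2,3\}$. The key equality is $\{g\in\Delta : \pi_i(g)\in\pi_i(H)\}=H$: if $\pi_i(g)=ab$ with $a\in P_{ij}$, $b\in P_{ik}$, lifting $a,b$ to $\ell_j\in L_j$, $\ell_k\in L_k$ gives $\pi_i(g(\ell_j\ell_k)^{-1})=1$, so $g\in L_i\ell_j\ell_k\subseteq H$. Consequently $\pi_i$ induces an isomorphism $\Delta/H\xrightarrow{\ \sim\ }G_i/\pi_i(H)$, whence $[\Delta:H]=[\pi_i(\Delta):\pi_i(H)]$ for every $i$, as claimed.

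The conceptual core --- that the genuinely three-way dependencies are Abelian --- is then a one-line commutator computation: for $\{i,j,k\}=\{1,2,3\}$, \[[P_{ij},P_{ik}]=[\pi_i(L_j),\pi_i(L_k)]=\pi_i([L_j,L_k])=1,\] so $P_{ij}$ and $P_{ik}$ commute elementwise in $G_i$. In particular the overlap $M_i:=P_{ij}\cap P_{ik}$ is Abelian and central in $\pi_i(H)=P_{ij}P_{ik}$, exhibiting $\pi_i(H)$ as the central product of $P_{ij}\cong H_j$ and $P_{ik}\cong H_k$ amalgamated over $M_i$, where I set $H_i:=L_i$. I would then check that the isomorphisms $L_i\cong P_{ji}\cong P_{ki}$ carry these overlaps to one another --- a short chase showing $\pi_j(\ell)\in M_j$ forces $\pi_k(\ell)\in M_k$ for $\ell\in L_i$ --- so that $M_1\cong M_2\cong M_3$ compatibly and we may fix a single Abelian group $M$ with central embeddings $M\hookrightarrow Z(H_i)$.

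Finally I would assemble the isomorphism $H\cong K/N$ with the displayed group. The inclusion $H\hookrightarrow\pi_1(H)\times\pi_2(H)\times\pi_3(H)$ realizes each coordinate $\pi_i(H)$ as the central product of $H_j$ and $H_k$ over $M$; lifting each coordinate from this central product to the honest direct product $H_j\times H_k$ produces exactly the tuples of the displayed set, the side conditions $h_ih_i'^{-1}\in M$ recording that the two lifts of a coordinate differ by an overlap element, while quotienting by the diagonal subgroup $N$ undoes the lift. The single global relation $h_1h_1'^{-1}h_2h_2'^{-1}h_3h_3'^{-1}=1$ is then the one remaining constraint cutting out the image of $H$ inside the product of its three coordinate central products, and a cardinality check, $|H|=|L_1||L_2||L_3|/|M|=|K/N|$, confirms that no further relations are needed. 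I expect the main obstacle to lie precisely here: verifying that this one Abelian relation, with the cyclic indexing of the statement, is exactly the condition for a tuple of central-product elements to descend from a single element of $H$, and that the $M$-embeddings arising from the three coordinates are mutually consistent. The commutativity input $[L_j,L_k]=1$ is what makes all of this go through.
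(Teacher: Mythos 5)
Your route is essentially the paper's own: you take the same coordinate kernels (your $L_i$ are the paper's $H_i$), the same $H=\langle L_1,L_2,L_3\rangle=L_1L_2L_3$, the same projections $P_{ij}$ (the paper's $B_i,C_i$), the same overlaps $M_i$ with compatible canonical identifications, and the same per-coordinate decomposition $\pi_i(g)=c_ib_i^{-1}$ with lift ambiguity measured by $M$. Two local variations are worth noting, both to your credit. First, you derive $[L_j,L_k]=1$ from normality plus $L_j\cap L_k=\ker(\psi_i|_\Delta)=1$, which is cleaner than the paper's conjugation computation (Lemma~\ref{lem:hi:and:hj:commute}), though the content is the same. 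Second, and more substantially, you actually prove the index claim $[\pi_i(\Delta)\colon\pi_i(H)]=[\Delta\colon H]$ via the equality $\{g\in\Delta \mid \pi_i(g)\in\pi_i(H)\}=H$ (lifting a factorization $\pi_i(g)=ab$ into $L_j,L_k$), so that $\pi_i$ induces an isomorphism $\Delta/H\cong G_i/\pi_i(H)$; the paper's proof asserts this part of the statement without any argument, so this is a genuine addition and it is correct.

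The one real weakness sits exactly where you flag it. The claim that a coordinatewise lift of $g\in H$ lands in the displayed set --- that is, satisfies $h_ih_i'^{-1}\in M$ and the cyclic relation $h_1h_1'^{-1}h_2h_2'^{-1}h_3h_3'^{-1}=1$ --- is the theorem's core; it is what the paper establishes as equations \eqref{eq:two-factor-eq} and \eqref{eq:abelian-eq-system} by explicitly multiplying $(g_1,g_2,g_3)$ with elements of $H_2$ and $H_3$ to kill the first coordinate and reading off membership in $\pi_j(H_1)$. Your cardinality check cannot stand in for this: the count $|H|=|L_1||L_2||L_3|/|M|=|K/N|$ (which is correct) only upgrades an injective homomorphism to an isomorphism, but before any counting is meaningful you must know the map $H\to K/N$ is well defined, i.e.\ that the lifted tuple lies in $K$ at all. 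Fortunately your own machinery closes this in two lines, and you should say so explicitly: since $H=L_1L_2L_3$, write $g=\ell_1\ell_2\ell_3$; the lift induced by this decomposition satisfies $h_i=h_i'$ under the canonical identifications (with the statement's sign conventions), so both conditions hold trivially for it; and replacing the lift of coordinate $i$ changes it by some central $m\in M$, which perturbs one $h_j$ by $m$ and one $h_k'$ by $m$, hence enters the product $\prod_i h_ih_i'^{-1}$ once as $m$ and once as $m^{-1}$ and cancels, while each side condition changes only by an element of $M$. With that paragraph inserted the proof is complete --- and your counting then also supplies the surjectivity onto the quotient, a point the paper itself leaves implicit.
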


In the spirit of Goursat's Lemma we then investigate correspondence theorems between data obtained from the subgroup lattices of the~$G_i$ (as well as isomorphism between arising factor groups) and the subdirect products of~$G_1\times G_2\times G_3$.
For two special cases, namely the cases~$H = \Delta$, and~$M = \{1\}$, we obtain such a correspondence theorem for three factors.
Here the second case is a particular special case hinted at in~\cite{GenGoursat}, which is indeed describable by a symmetric version of a generalized Goursat's Lemma.
In a third special case, where one of the~$G_i$ is the semi-direct product of the projection of~$H$ onto the~$i$-th component and some other group, we also obtain a partial correspondence theorem.

As demonstrated by Petrillo~\cite{MR2801828}, Goursat Lemma can readily be applied to count subgroups of the product of two Abelian groups. For a direct product of an arbitrary number of Abelian groups the number of subgroups has been extensively studied. We refer to the monograph of Butler~\cite{MR1223236}.
In fact there are also explicit formulas for the counts of subgroups of~$\mathbb{Z}_p\times \mathbb{Z}_q\times \mathbb{Z}_r$ (see for example~\cite{MR3045601}). 
In line with the papers and as an application of our characterization, we derive an explicit formula to count the number of subdirect products of the direct product of three symmetric groups~$S_{n_1}\times S_{n_2}\times S_{n_3}$. 
It is also possible for example to count the normal subgroups of such direct products. In fact, the normal subgroups can be also characterized for arbitrary finite products of symmetric groups~\cite{MR0399275}.
Let us finally also point to some literature concerning finiteness properties of groups~\cite{MR3086064,MR2500867} which also contains some structural results on 3-factor direct products (in particular on the case we call 3-factor surjective).

\section{Goursat's Lemma}

 Let $G = G_1 \times G_2 \times \dots \times G_t$ be a direct product of groups. 
 We define for~$i\in \{1,\ldots,t\}$ the map~$\pi_i$ as the projection to the~$i$-th coordinate and we define the homomorphism~$\psi_{i}: \Delta \rightarrow G_1  \times \dots \times G_{i-1}\times G_{i+1}\times \dots \times G_t: (g_1,g_2,\ldots,g_t) \mapsto (g_1,\ldots,g_{i-1},g_{i+1},\ldots,g_t)$.

 A subgroup~$\Delta\leq G$  of the direct product is said to be a \emph{subdirect product} if $\pi_i(\Delta) = G_i$ for all $1 \leq i \leq t$. Goursat's Lemma is a classic statement  concerned with the structure of subdirect products of direct products of two factors.

\begin{theorem}[Goursat's Lemma]
Let~$\Delta\leq G_1 \times G_2 =G$ be a subdirect product and define~$N_1 = \{g_1\in G_1\mid (g_1,1)\in \Delta \}$ as well as~$N_2 = \{g_2\in G_1\mid (1,g_2)\in \Delta \}$.
Then~$G_1/N_1$ is isomorphic to~$G/N_2$ via an isomorphism~$\varphi$ for which~$(g_1,g_2)\in \Delta$ if and only if~$\varphi(g_1)= g_2$. 
\end{theorem}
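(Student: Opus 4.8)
The plan is to exhibit $\varphi$ explicitly as the isomorphism whose graph, read through the two quotients, is exactly $\Delta$; here $\varphi$ is understood as a map $G_1/N_1 \to G_2/N_2$, so the asserted condition reads $\varphi(g_1 N_1) = g_2 N_2$. First I would record that $N_1 \trianglelefteq G_1$ and $N_2 \trianglelefteq G_2$, which is where subdirectness enters. Given $g_1 \in N_1$ and an arbitrary $h_1 \in G_1$, surjectivity of $\pi_1$ on $\Delta$ supplies some $h_2$ with $(h_1, h_2) \in \Delta$; conjugating $(g_1, 1) \in \Delta$ by $(h_1, h_2)$ yields $(h_1 g_1 h_1^{-1}, 1) \in \Delta$, so $N_1$ is closed under conjugation in $G_1$, and symmetrically for $N_2$.

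Next I would define $\varphi$ by choosing, for each $g_1 \in G_1$, some $g_2$ with $(g_1, g_2) \in \Delta$ (possible since $\pi_1(\Delta) = G_1$) and setting $\varphi(g_1 N_1) = g_2 N_2$. The crux of the argument, and the step I expect to be the main obstacle, is well-definedness, which splits into two parts. First, the second coordinate is pinned down modulo $N_2$: if $(g_1, g_2)$ and $(g_1, g_2')$ both lie in $\Delta$, then $(1, g_2^{-1} g_2') \in \Delta$, so $g_2^{-1} g_2' \in N_2$ and the value $g_2 N_2$ is unambiguous. Second, the value is independent of the representative of the coset in $G_1$: if $g_1^{-1} g_1' \in N_1$, then $(g_1^{-1} g_1', 1) \in \Delta$, and multiplying $(g_1, g_2)$ by it on the right gives $(g_1', g_2) \in \Delta$, so $g_1'$ may be paired with the same $g_2$.

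The remaining properties are then short verifications. The map $\varphi$ is a homomorphism because $\Delta$ is closed under multiplication: from $(g_1, g_2), (h_1, h_2) \in \Delta$ one gets $(g_1 h_1, g_2 h_2) \in \Delta$. It is surjective because $\pi_2(\Delta) = G_2$ furnishes, for each $g_2$, some $g_1$ with $(g_1, g_2) \in \Delta$. It is injective because $\varphi(g_1 N_1) = N_2$ forces the chosen partner $g_2$ of $g_1$ into $N_2$, whence $(1, g_2) \in \Delta$ and $(g_1, 1) = (g_1, g_2)(1, g_2)^{-1} \in \Delta$, giving $g_1 \in N_1$.

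Finally, the stated correspondence falls directly out of the construction. If $(g_1, g_2) \in \Delta$ then by definition $\varphi(g_1 N_1) = g_2 N_2$. Conversely, if $\varphi(g_1 N_1) = g_2 N_2$, then the chosen partner $g_2'$ of $g_1$ satisfies $g_2' N_2 = g_2 N_2$, so $g_2'^{-1} g_2 \in N_2$ and hence $(1, g_2'^{-1} g_2) \in \Delta$; multiplying gives $(g_1, g_2) = (g_1, g_2')(1, g_2'^{-1} g_2) \in \Delta$, as required.
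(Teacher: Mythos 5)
Your proof is correct and complete; it is the standard argument for Goursat's Lemma. Note that the paper itself states this lemma without proof (it is a classical result cited from the literature), so there is no proof in the paper to compare against. Your write-up in fact does slightly more than fill the gap: it silently repairs the statement's typographical slips ($N_2$ should consist of elements of $G_2$, not $G_1$; the conclusion should read $G_1/N_1 \cong G_2/N_2$, not $G/N_2$) and makes precise the coset-level reading $\varphi(g_1 N_1) = g_2 N_2$ of the condition ``$\varphi(g_1) = g_2$'', which is exactly the interpretation the paper relies on when it subsequently describes $\Delta$ as a fiber product of $G_1$ and $G_2$ over $G_i/N_i$. All the individual steps check out: normality of $N_1, N_2$ via conjugation inside $\Delta$ (using subdirectness to lift conjugators), the two-part well-definedness argument, and the injectivity computation $(g_1,1) = (g_1,g_2)(1,g_2)^{-1} \in \Delta$ are each sound.
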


This gives a natural homomorphism~$\Delta \rightarrow G_1/N_1\times G_2/N_2$ defined as~$(g_1,g_2)\mapsto (g_1N_1,g_2N_2)$ with image~$\{(g_1,g_2) \mid \varphi(g_1) = g_2\}$. Thus we can view~$\Delta$ as a fiber product (or pull back) of~$G_1$ and~$G_2$ over~$G_i/N_i$.

A typical application of the lemma is a proof of the fact that subdirect products of non-Abelian finite simple groups are direct products of diagonal subgroups.
Furthermore, the lemma can be applied to count subgroups of direct products. For this, the following correspondence version of the lemma  is more convenient.

\begin{theorem}\label{thm:goursat:correspondence}
There is a natural one-to-one correspondence between the subgroups of~$G_1 \times G_2$ and the tuples~$(P_1,P_2,N_1,N_2,\varphi)$ for
which for~$i\in \{1,2\}$ we have 
\begin{enumerate}
\item $N_i\trianglelefteq P_i\leq G_i$ and
\item $P_1/N_1  \stackrel{\varphi}{\cong} P_2/N_2$.
\end{enumerate}
\end{theorem}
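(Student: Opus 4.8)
The plan is to exhibit two explicit, mutually inverse maps between the subgroups of $G_1\times G_2$ and the tuples $(P_1,P_2,N_1,N_2,\varphi)$ satisfying conditions (1) and (2), using the (non-correspondence) version of Goursat's Lemma stated above as the bridge. The point is that the earlier lemma is phrased only for subdirect products, so the first task is to reduce the general subgroup case to it by passing to the projections.

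First I would describe the map from subgroups to tuples. Given an arbitrary subgroup $\Delta\leq G_1\times G_2$, set $P_i:=\pi_i(\Delta)$. These are subgroups of $G_i$, and by construction $\Delta$ is a \emph{subdirect} product of $P_1\times P_2$, since $\pi_i(\Delta)=P_i$. Applying Goursat's Lemma to $\Delta\leq P_1\times P_2$ yields $N_i=\{g_i\in P_i\mid (g_i,1)\in\Delta\}\trianglelefteq P_i$ (for $i=2$ the first coordinate is fixed to $1$) together with an isomorphism $\varphi\colon P_1/N_1\to P_2/N_2$ characterised by $(g_1,g_2)\in\Delta$ iff $\varphi(g_1N_1)=g_2N_2$. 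This produces a tuple meeting both conditions. Conversely, given a tuple $(P_1,P_2,N_1,N_2,\varphi)$, I would define the fibre product
\[\Delta:=\{(g_1,g_2)\in P_1\times P_2\mid \varphi(g_1N_1)=g_2N_2\}.\]
That this is a subgroup follows because $\varphi$ and the quotient maps are homomorphisms, so the defining relation is preserved under products and inverses.

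It then remains to check that these two assignments are inverse to one another. Starting from $\Delta$, extracting $(P_1,P_2,N_1,N_2,\varphi)$ and reforming the fibre product returns exactly $\Delta$, which is immediate from the ``if and only if'' characterisation supplied by Goursat's Lemma. For the other composite, starting from a tuple and forming $\Delta$, I would verify that the recovered data agree with the original: the identity $\pi_1(\Delta)=P_1$ holds because $\varphi$ is defined on all of $P_1/N_1$, and $\pi_2(\Delta)=P_2$ because $\varphi$ is surjective (so every coset of $N_2$ is attained) combined with $N_2\leq P_2$; moreover $\{g_1\mid(g_1,1)\in\Delta\}=N_1$ because $\varphi$ is injective, forcing $\varphi(g_1N_1)=N_2$ to imply $g_1\in N_1$, and symmetrically for $N_2$; finally the recovered isomorphism coincides with $\varphi$ directly from the defining relation.

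The genuinely load-bearing step is this second composite, and specifically the recovery of the \emph{exact} data: the equalities $\pi_2(\Delta)=P_2$ and $\{g_1\mid(g_1,1)\in\Delta\}=N_1$ both rest on $\varphi$ being a bijection (surjectivity for the former, injectivity for the latter), which is precisely where condition (2) is used in full strength. The remaining verifications—subgroup closure, well-definedness on cosets, and the reconstruction of $\Delta$ from its tuple—are routine once the roles of surjectivity and injectivity of $\varphi$ are isolated, and the ``natural'' in the statement refers to the fact that both maps are canonical, involving no arbitrary choices.
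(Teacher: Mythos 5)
Your proof is correct. Note that the paper states Theorem~\ref{thm:goursat:correspondence} without any proof, treating it as the classical correspondence form of Goursat's Lemma, so there is no proof in the paper to diverge from; your argument---reducing an arbitrary subgroup $\Delta\leq G_1\times G_2$ to a subdirect product of $\pi_1(\Delta)\times\pi_2(\Delta)$, invoking the two-factor lemma there, and inverting via the fibre product---is the standard derivation and is consistent with how the paper uses the theorem (e.g.\ the tuples with $P_i=G_i$ being exactly the subdirect products). You also correctly isolate the load-bearing points: surjectivity of $\varphi$ for $\pi_2(\Delta)=P_2$ and injectivity for recovering $N_1$, and you rightly read the paper's loosely stated condition ``$\varphi(g_1)=g_2$'' as the coset condition $\varphi(g_1N_1)=g_2N_2$, which is what the lemma must mean since $\varphi$ is defined on the quotient groups.
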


Here, we write~$G_1 \stackrel{\varphi}{\cong} G_2$ to denote that~$G_1$ and~$G_2$ are isomorphic via an isomorphism~$\varphi$.
The subdirect products correspond to those tuples for which~$P_1 = G_1$ and~$P_2  = G_2$. Diagonal subgroups are those subdirect products that also satisfy~$N_1 = N_2 =1$. Subproducts are those for which~$N_1 = P_1$ and~$N_2 = P_2$.

\section{Three factors}

We now focus on 3-factor subdirect products. Before we investigate the general case, we consider four examples of subdirect products. 

In our first examples, we consider groups that are~$2$-factor surjective. We say~$\Delta\leq G_1\times G_2\times G_3$ is
\emph{$2$-factor surjective} if~$\psi_{i}$ is surjective for all~$1 \leq i \leq 3$. Note that the analogous definition of~$1$-factor surjectivity (i.e., all~$\pi_i$ are surjective) means then the same as being subdirect.

Similarly, we say~$\Delta$ is \emph{$2$-factor injective} if~$\psi_{i}$ is injective for all~$1 \leq i \leq 3$. Note that this assumption is equivalent to saying that two components of an element of~$\Delta$ determine the third. Analogously~$1$-factor injective then means that one component determines the other two. 

\subsection{Examples of 3-factor direct products}

\begin{example}
The subgroup of~$(G_1)^3$ comprised of the set~$\{(g,g,g)\mid g\in G_1)\}$ is called the diagonal subgroup.
\end{example}

It is not difficult to see that the only~$1$-factor injective subdirect products are diagonal subgroups.
As a second example, let~$G_1$ be an Abelian group. Then the group
\begin{example}
 \label{ex:cfi-group}
 \begin{equation}
  \Delta \coloneqq \{(a,b,c) \in (G_1)^{3}\;|\;abc = 1\}.
 \end{equation}
 is a subdirect product of~$(G_1)^3$ that is~$2$-factor surjective and $2$-factor injective. 
 \end{example}
 It turns out that this is the only type of group with these properties. 
 
 \begin{lemma}
  Let $G = G_1\times G_2\times G_3$ be a group and~$\Delta$ a subdirect product of~$G$ that is~$2$-factor surjective and $2$-factor  injective. Then~$G_1$,~$G_2$ and~$G_3$ are isomorphic Abelian groups and~$\Delta$ is isomorphic to the subgroup of~$G_1^3$ given by~$\{(a,b,c) \in (G_1)^{3}\;|\;abc = 1\}$, which in turn is isomorphic to~$(G_1)^{2}$ as an abstract group.
\end{lemma}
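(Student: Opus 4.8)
The plan is to exploit the fact that, under the two hypotheses, each $\psi_i$ is simultaneously injective and surjective, hence a group isomorphism. I would start from $\psi_3\colon \Delta \to G_1\times G_2$: since it is bijective, its inverse is an isomorphism and $\Delta$ is exactly the graph of the homomorphism $f = \pi_3\circ\psi_3^{-1}\colon G_1\times G_2\to G_3$, so that $\Delta = \{(g_1,g_2,f(g_1,g_2))\mid g_1\in G_1,\ g_2\in G_2\}$. Writing $\alpha(g_1) = f(g_1,1)$ and $\beta(g_2)=f(1,g_2)$, the fact that $f$ is a homomorphism on a direct product gives $f(g_1,g_2)=\alpha(g_1)\beta(g_2)$, where $\alpha\colon G_1\to G_3$ and $\beta\colon G_2\to G_3$ are homomorphisms whose images commute elementwise (because $(g_1,1)$ and $(1,g_2)$ commute in $G_1\times G_2$).

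Next I would use the other two projections to pin down $\alpha$ and $\beta$. Applying injectivity and surjectivity of $\psi_1\colon\Delta\to G_2\times G_3$ to the graph description shows that, for fixed $g_2$, the assignment $g_1\mapsto\alpha(g_1)\beta(g_2)$ is a bijection $G_1\to G_3$; since right multiplication by $\beta(g_2)$ is a bijection of $G_3$, this forces $\alpha$ to be a bijective homomorphism, i.e.\ an isomorphism $G_1\cong G_3$. The symmetric argument with $\psi_2$ makes $\beta\colon G_2\to G_3$ an isomorphism. This is where the decisive observation enters: $\alpha$ and $\beta$ are both surjective, so their images exhaust $G_3$, and the commuting relation $\alpha(g_1)\beta(g_2)=\beta(g_2)\alpha(g_1)$ then says that every pair of elements of $G_3$ commutes. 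Hence $G_3$ is Abelian, and via $\alpha$ and $\beta$ so are $G_1$ and $G_2$, all three being isomorphic.

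With Abelianness in hand, the remaining claims are a matter of transporting $\Delta$ along these isomorphisms. I would define $\Phi\colon G_1\times G_2\times G_3\to G_3^3$ by $(g_1,g_2,g_3)\mapsto(\alpha(g_1),\beta(g_2),g_3^{-1})$; this is a homomorphism precisely because $G_3$ is Abelian (so inversion is a homomorphism), and it is injective since $\alpha$, $\beta$ and inversion are. Restricting $\Phi$ to $\Delta$ and using $g_3=\alpha(g_1)\beta(g_2)$, the image is exactly $\{(a,b,c)\in G_3^3\mid abc=1\}$, so $\Delta$ is isomorphic to this group; composing each coordinate with $\alpha^{-1}$ replaces $G_3$ by $G_1$. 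Finally, the projection $(a,b,c)\mapsto(a,b)$ is an isomorphism from $\{(a,b,c)\in G_1^3\mid abc=1\}$ onto $G_1^2$, since $c$ is determined by $a$ and $b$.

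The genuinely non-routine step is the middle one: recognising that the commuting images of two surjections force $G_3$ to be Abelian. Once that is established, every other step is essentially bookkeeping with the explicit graph of $f$, and the two explicit isomorphisms finish the proof.
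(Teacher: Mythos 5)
Your proof is correct, but it reaches the decisive conclusion by a different mechanism than the paper. You package the two hypotheses into the single observation that each $\psi_i$ is an isomorphism, realize $\Delta$ as the graph of the homomorphism $f=\pi_3\circ\psi_3^{-1}\colon G_1\times G_2\to G_3$, split $f(g_1,g_2)=\alpha(g_1)\beta(g_2)$ into homomorphisms with elementwise-commuting images (the standard fact about maps out of a direct product), extract bijectivity of $\alpha$ and $\beta$ from $\psi_1$ and $\psi_2$, and then conclude that $G_3$ is Abelian because it is covered by two surjections whose images commute. The paper instead argues at the level of elements: $2$-factor surjectivity produces $(g_1,g_2,1),(g_1',1,g_3)\in\Delta$, the conjugation $(g_1,g_2,1)^{(g_1',1,g_3)}=(g_1^{g_1'},g_2,1)$ combined with $2$-factor injectivity forces $g_1^{g_1'}=g_1$, giving Abelianness of $G_1$ directly; the isomorphism $\varphi\colon G_1\to G_2$ is then read off the slice $\{(g_1,g_2,1)\}\subseteq\Delta$ (your $\alpha$ and $\beta$ play the role of the paper's $\varphi$ and its analogue for $G_3$), and an explicit map onto $\{(a,b,c)\mid abc=1\}$ finishes. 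The paper's route is shorter and needs no bookkeeping with $\psi_3^{-1}$; yours is more structural, exhibiting $\Delta$ as a fiber-product-style graph and making the Abelian constraint appear as the inevitable consequence of two commuting surjections --- essentially the same phenomenon that drives Lemma~\ref{lem:hi:and:hj:commute} later in the paper. A small bonus of your write-up: you explicitly verify the final claim that $\{(a,b,c)\in G_1^3\mid abc=1\}\cong (G_1)^2$ via projection onto the first two coordinates, which the paper asserts in the lemma statement but does not spell out in its proof.
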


\begin{proof}
Let~$g_1$ and~$g'_1$ be elements of~$G_1$. Then by~$2$-factor surjectivity there are elements~$g_2$ and~$g_3$ such that~$(g_1,g_2,1)\in \Delta$ and~$(g'_1,1,g_3)\in \Delta$.
We thus have that~$(g_1,g_2,1)^{(g'_1,1,g_3)}= (g_1^{g'_1},g_2,1)$. By 2-factor injectivity it follows that~$g_1^{g'_1} = g_1$ and thus~$G_1$ is Abelian.

For every~$g_1\in G_1$ there is exactly one element of the form~$(g_1,g_2,1)$ in~$\Delta$. The map~$\varphi$ that sends every~$g_1$ to the corresponding~$g_2$ provides us with a map from~$G_1$ to~$G_2$. By 2-factor surjectivity and 2-factor injectivity, this map is an isomorphism from~$G_1$ to~$G_2$. Similarly,~$G_1$ and~$G_3$ are isomorphic.

Finally, note that the map that sends~$(g_1,g_2,g_3)$ to~$(g_1,\varphi^{-1}(g_2)^{-1},g_1^{-1}\varphi^{-1}(g_2) )$ is an isomorphism from~$\Delta$ to~$\{(a,b,c) \in (G_1)^{3}\;|\;abc = 1\}$.
\end{proof}

We now drop the requirement for the group to be~$2$-factor surjective.
Our next examples of 2-factor injective subdirect products will be non-Abelian.

\begin{example}\label{ex:diagonal}
 Let $G_1 = H \rtimes K$ be a semidirect product with an Abelian normal subgroup $H$.
 Then
 \begin{equation}
   \label{eq-cfi-group}
   \Delta = \{(ak, bk, ck) \in (G_1)^{3}\;|\;a,b,c \in H,\,k \in K,\,abc = 1\}. 
  \end{equation}

is a  2-factor injective subdirect product of~$(G_1)^3$. To see this we verify
 that~$\Delta$ is closed under multiplication. 
Let $d = (ak, bk, ck), d' = (a'k', b'k', c'k') \in \Delta$.
Then
 \begin{align*}
  dd' &= (ak, bk, ck)(a'k', b'k', c'k')\\
      &= (aka'k', bkb'k', ckc'k')\\
      &= (a(ka'k^{-1})kk', b(kb'k^{-1})kk', c(kc'k^{-1})kk')
 \end{align*}
 and $a(ka'k^{-1})b(kb'k^{-1})c(kc'k^{-1}) = (abc)k(a'b'c')k^{-1} = 1$ implying $dd' \in \Delta$.
 So $\Delta \leq (G_1)^{3}$.
 The fact that~$\Delta$ is a subdirect product and  2-factor injective follows directly from the definition. 
\end{example}

\begin{example}
\label{ex:degenerate}
As a next example suppose~$G_1 = H_2 \times H_3$,~$G_2 = H_1\times H_3$ and~$G_3 = H_1\times H_2$ with arbitrary finite groups~$H_i$. 
Then the group consisting of the set\[\{((h_2,h_3),(h_1,h_3),(h_1,h_2))\mid h_i\in H_i\}\] is a 2-factor injective subdirect product of~$G_1\times G_2\times G_3$. 
\end{example}

Finally, it is not difficult to construct subdirect products that are not~$2$-factor injective by considering extensions of the factors.

\begin{example}
Let~$\Delta \leq G_1\times G_2\times G_3$ be a subdirect product and let~$\widetilde{G_1}$ be a surjective homomorphism~$\kappa\colon \widetilde{G_1}\rightarrow G_1$.
Then~$\{(g_1,g_2,g_3)\in \widetilde{G_1}\times G_2 \times G_3\mid (\kappa(g_1),g_2,g_3)\in \Delta\}$ is a subdirect product of~$\widetilde{G_1}\times G_2\times G_3$ that is not~$2$-factor injective if~$\kappa$ is not injective.
\end{example} 

\subsection{The structure of subgroups of 3-factor direct products}
\label{subsec:three-factors}

We now analyze the general case, showing that it must essentially be a combination of the examples presented above. We first argue that we can focus our attention on 2-factor injective subdirect products. 

\begin{lemma}
 \label{lem:from-3-factor-to-2-factor-injective}
 Let~$\Delta\leq  G_1\times G_2\times G_3$ be a subdirect product.
 Let~$N_i = \pi_i(\ker(\psi_i))$ for~$i\in \{1,2,3\}$. Then~$\Delta' = \Delta/ (N_1\times N_2\times N_3)$ is a 2-factor injective subdirect product and~$\Delta = \{(g_1,g_2,g_3) \mid (g_1 N_1,g_2N_2,g_3N_3)\in \Delta'\}$.
\end{lemma}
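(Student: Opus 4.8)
The plan is to realize $\Delta'$ as a subdirect product of the quotient $G_1/N_1 \times G_2/N_2 \times G_3/N_3$ and to verify the two asserted properties by elementary coordinate manipulations. The first task is to make sense of the statement. I would observe that each $\ker(\psi_i)$ is a normal subgroup of $\Delta$, being the kernel of a homomorphism, and that its image $N_i = \pi_i(\ker(\psi_i))$ is a \emph{normal} subgroup of $G_i$. The latter follows because for any $g\in G_i$ subdirectness gives a $\delta\in\Delta$ with $\pi_i(\delta)=g$, and conjugating an element of $\ker(\psi_i)$ by $\delta$ keeps it in $\ker(\psi_i)$ while conjugating its $i$-th coordinate by $g$. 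In particular the factor groups $G_i/N_i$ exist, so the natural surjection $q\colon G_1\times G_2\times G_3 \to G_1/N_1\times G_2/N_2\times G_3/N_3$ is a well-defined homomorphism with kernel $N_1\times N_2\times N_3$.

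Next I would identify $N_1 \times N_2 \times N_3$ with the product $\ker(\psi_1)\ker(\psi_2)\ker(\psi_3)$ inside $\Delta$: multiplying $(n_1,1,1)$, $(1,n_2,1)$, $(1,1,n_3)$, each lying in $\Delta$ by definition of the $N_i$, yields exactly $(n_1,n_2,n_3)$, so $N_1\times N_2\times N_3\subseteq\Delta$ and, as a product of normal subgroups of $\Delta$, it is itself normal in $\Delta$. This shows that $\Delta'=\Delta/(N_1\times N_2\times N_3)$ is defined, and since $N_1\times N_2\times N_3$ is precisely $\Delta\cap\ker(q)$, the first isomorphism theorem lets me view $\Delta'=q(\Delta)$ as a subgroup of $G_1/N_1\times G_2/N_2\times G_3/N_3$. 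Subdirectness of $\Delta'$ is then immediate, since the $i$-th projection of $q(\Delta)$ is the image of $\pi_i(\Delta)=G_i$ in $G_i/N_i$, namely all of $G_i/N_i$.

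The substantive step is 2-factor injectivity, which by symmetry I would prove only for $\psi_1'$. An element of $\ker(\psi_1')$ is represented by some $(g_1,g_2,g_3)\in\Delta$ with $g_2\in N_2$ and $g_3\in N_3$, and I must show $g_1\in N_1$. Here I use that $g_2\in N_2$ means $(1,g_2,1)\in\Delta$ and $g_3\in N_3$ means $(1,1,g_3)\in\Delta$, so that the product $(g_1,g_2,g_3)(1,g_2^{-1},1)(1,1,g_3^{-1})=(g_1,1,1)$ again lies in $\Delta$. Hence $g_1\in N_1$ and the class is trivial, giving $\ker(\psi_1')=1$. This coordinate-killing argument is the heart of the proof, though it is short; the only thing to be careful about is that the elements used to cancel the second and third coordinates genuinely lie in $\Delta$, which is exactly what the definition of the $N_i$ guarantees.

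Finally, for the preimage description $\Delta=\{(g_1,g_2,g_3)\mid (g_1N_1,g_2N_2,g_3N_3)\in\Delta'\}=q^{-1}(\Delta')$, I would invoke the standard identity $q^{-1}(q(\Delta))=\Delta\cdot\ker(q)$. Since $\ker(q)=N_1\times N_2\times N_3$ is contained in $\Delta$ by the second step, this product collapses to $\Delta$ itself, completing the proof. I expect no real obstacle beyond keeping the bookkeeping straight; the only conceptual point is the identification of the abstract quotient $\Delta/(N_1\times N_2\times N_3)$ with the concrete subdirect product $q(\Delta)$ of the factor groups, which rests on the containment $N_1\times N_2\times N_3\subseteq\Delta$.
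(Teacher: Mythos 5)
Your proof is correct and complete: the normality of $N_i$ via subdirectness, the containment $N_1\times N_2\times N_3\leq\Delta$ (so the quotient $\Delta'$ can be identified with $q(\Delta)$ inside $G_1/N_1\times G_2/N_2\times G_3/N_3$), the coordinate-killing computation $(g_1,g_2,g_3)(1,g_2^{-1},1)(1,1,g_3^{-1})=(g_1,1,1)\in\Delta$ for 2-factor injectivity, and the identity $q^{-1}(q(\Delta))=\Delta\cdot\ker(q)=\Delta$ are exactly the steps this lemma requires. The paper in fact states this lemma without proof (only the normality remark resurfaces later, in the proof of Lemma~\ref{lem:3-factor-correspondence-to-2-factor-injective}), and your argument supplies precisely the standard reasoning the authors leave implicit, with the one genuinely careful point---that the quotient group $\Delta/(N_1\times N_2\times N_3)$ may be viewed as a concrete subgroup of the product of quotients---handled correctly.
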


Thus in the following suppose~$\Delta$ is a 2-factor injective subdirect product of~$G_1\times G_2 \times G_3$.

Let~$H_i = \ker(\pi_i)\cap \Delta = \{(g_1,g_2,g_3) \in \Delta\mid g_i = 1 \}$. Then~$H = \langle H_1,H_2,H_3\rangle$ is a normal subgroup of~$\Delta$.

\begin{lemma}\label{lem:hi:and:hj:commute}

For~$i,j\in \{1,2,3\}$ with~$i\neq j$ we have~$[H_i,H_j] = 1$ that is, all elements in~$H_i$ commute with all elements in~$H_j$.
\end{lemma}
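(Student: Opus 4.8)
The plan is to exploit 2-factor injectivity directly on a single commutator. Fix distinct indices $i,j\in\{1,2,3\}$ and let $k$ be the remaining index, so that $\{i,j,k\}=\{1,2,3\}$. Take arbitrary $x\in H_i$ and $y\in H_j$; by the definition of $H_i$ and $H_j$ the element $x$ has trivial $i$-th component and $y$ has trivial $j$-th component. Since the group operation on $G_1\times G_2\times G_3$ is componentwise, the commutator $[x,y]=x^{-1}y^{-1}xy$ is computed componentwise as well, so its $\ell$-th component equals $[\pi_\ell(x),\pi_\ell(y)]$.

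First I would observe that the $i$-th and $j$-th components of $[x,y]$ both vanish: in the $i$-th component $\pi_i(x)=1$ forces $[\pi_i(x),\pi_i(y)]=1$, and symmetrically in the $j$-th component $\pi_j(y)=1$ forces $[\pi_j(x),\pi_j(y)]=1$. Since $\psi_k$ is the projection that forgets the $k$-th coordinate and hence retains precisely the $i$-th and $j$-th coordinates, this shows $\psi_k([x,y])=1$, that is $[x,y]\in\ker(\psi_k)$. Note also that $[x,y]\in\Delta$ because $x,y\in\Delta$ and $\Delta$ is a subgroup, so $\psi_k$ is indeed defined on $[x,y]$.

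The final step invokes the hypothesis: 2-factor injectivity says that $\psi_k$ is injective, so $\ker(\psi_k)$ is trivial and therefore $[x,y]=1$. As $x\in H_i$ and $y\in H_j$ were arbitrary, this yields $[H_i,H_j]=1$, as claimed.

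I do not expect a genuine obstacle here: the whole argument rests on the elementary fact that a commutator in a direct product is computed coordinatewise, combined with the injectivity of the one projection that forgets exactly the coordinate in which $[x,y]$ could conceivably be nontrivial. The only point that requires care is the index bookkeeping, namely checking that $\psi_k$ forgets precisely the $k$-th coordinate while the triviality of the $i$-th and $j$-th components of $[x,y]$ already places it in $\ker(\psi_k)$, so that injectivity closes the argument.
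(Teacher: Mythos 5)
Your proof is correct and is essentially the paper's argument in a different guise: the paper conjugates an element of $H_i$ by one of $H_j$ and applies injectivity of $\psi_k$ to the conjugate and the original element, which is the same as your observation that $[x,y]$ has trivial $i$-th and $j$-th components and hence lies in the trivial kernel of $\psi_k$. No gaps; the index bookkeeping you flag is handled correctly.
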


\begin{proof}
 Without loss of generality assume $i = 1$ and~$j=2$.
 For~$(1,g_2,g_3)\in H_1$ and~$(h_1,1,h_3)$ in~$H_2$ we get
 that~$(1,g_2,g_3)^{(h_1,1,h_3)} = (1,g_2,g_3^{h_3})$. By 2-factor injectivity we conclude that~$g_3^{h_3} = g_3$ and thus the two elements commute.
\end{proof}

Define~$M_i \coloneqq \pi_{i}(H_k) \cap \pi_i(H_j)$, where~$j$ and~$k$ are chosen so that~$\{i,j,k\} = \{1,2,3\}$.

\begin{lemma}\label{lem:canonical:iso:between:groups}
Let~$i,j,k$ be integers such that~$\{i,j,k\} = \{1,2,3\}$. Then there is a canonical isomorphism~$\varphi \coloneqq \varphi^i_{j,k}$ from~$\pi_j(H_i)$ to~$\pi_k(H_i)$ that maps~$M_j$ to~$M_k$.
\end{lemma}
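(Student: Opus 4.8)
The plan is to define $\varphi$ directly by reading off coordinates and let 2-factor injectivity do all the work. Since the statement is symmetric under permuting the index triple, I would fix $i=1$, $j=2$, $k=3$ and construct $\varphi^1_{2,3}\colon \pi_2(H_1)\to \pi_3(H_1)$. Every $g_2\in \pi_2(H_1)$ arises as the second coordinate of some element $(1,g_2,g_3)\in H_1\subseteq \Delta$, and since $\psi_3$ is injective the pair $(1,g_2)$ already determines $g_3$; hence $g_3$ depends only on $g_2$, and I set $\varphi(g_2)\coloneqq g_3$. Symmetrically, injectivity of $\psi_2$ shows $g_2$ is determined by $g_3$, so the analogous assignment $\pi_3(H_1)\to \pi_2(H_1)$ is a two-sided inverse and $\varphi$ is a bijection. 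The homomorphism property is immediate from componentwise multiplication: if $(1,g_2,g_3),(1,g_2',g_3')\in H_1$, then their product $(1,g_2g_2',g_3g_3')$ again lies in $H_1$, so $\varphi(g_2g_2')=g_3g_3'=\varphi(g_2)\varphi(g_2')$. This yields the canonical isomorphism $\pi_2(H_1)\cong \pi_3(H_1)$.

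It then remains to verify $\varphi(M_2)=M_3$, where $M_2=\pi_2(H_1)\cap \pi_2(H_3)$ and $M_3=\pi_3(H_1)\cap \pi_3(H_2)$; note that $M_2\subseteq \pi_2(H_1)$ and $M_3\subseteq \pi_3(H_1)$, so restricting $\varphi$ makes sense. The one step requiring more than bookkeeping is to relate membership in the two intersections by combining witnesses multiplicatively. Given $g_2\in M_2$ with $\varphi(g_2)=g_3$, I have $(1,g_2,g_3)\in H_1$ and, since $g_2\in \pi_2(H_3)$, also $(h_1,g_2,1)\in H_3$ for some $h_1$. Multiplying, $(1,g_2,g_3)(h_1,g_2,1)^{-1}=(h_1^{-1},1,g_3)\in \Delta$, which exhibits $g_3\in \pi_3(H_2)$; combined with $g_3\in \pi_3(H_1)$ this gives $g_3\in M_3$. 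The reverse inclusion is the mirror image: for $g_3\in M_3$ with $g_2=\varphi^{-1}(g_3)$, I multiply $(1,g_2,g_3)\in H_1$ by the inverse of a witness $(h_1',1,g_3)\in H_2$ to obtain $(h_1'^{-1},g_2,1)\in H_3$, whence $g_2\in \pi_2(H_3)$ and so $g_2\in M_2$. Thus $\varphi$ restricts to a bijection $M_2\to M_3$, and the same construction with the indices cycled proves the general statement.

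I expect the whole argument to reduce to a single recurring idea — that two components of a $\Delta$-element determine the third — used first to make the coordinate-reading map well-defined and invertible, and then, via the cancellation trick, to pass between $M_j$ and $M_k$. The cancellation step is the only genuinely substantive point; everything else is forced by 2-factor injectivity. The main care will be purely organizational: keeping the cyclic index convention $\{i,j,k\}=\{1,2,3\}$ consistent and matching the correct $H_\ell$ to each projection in the definitions of $M_j$ and $M_k$, which the reduction to $(1,2,3)$ should make routine.
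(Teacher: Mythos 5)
Your proposal is correct and follows essentially the same route as the paper: define the map by reading off the third coordinate of elements $(1,g_2,g_3)\in H_1$, use 2-factor injectivity for well-definedness and bijectivity, and use the same cancellation trick (multiplying by the inverse of a witness from $H_3$ resp.\ $H_2$) to show $\varphi(M_2)=M_3$, where you even spell out both inclusions while the paper invokes symmetry for the reverse one. The only deviation is cosmetic: the paper's convention is $(1,g_2,\varphi(g_2)^{-1})\in\Delta$ rather than your $\varphi(g_2)=g_3$, and your choice in fact makes the homomorphism property hold verbatim on all of $\pi_j(H_i)$ by componentwise multiplication.
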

\begin{proof}
Assume without loss of generality that~$i =1, j=2$ and~$k = 3$.
Define a map~$\varphi \colon \pi_2(H_1)\to \pi_3(H_1)$ such that~$(1,g_2,\varphi(g_2)^{-1})\in \Delta$ for all~$g_2\in \pi_2(H_1)$. Such a map exists and is well defined since~$\Delta$ is a 2-factor injective subdirect product.
Suppose~$g_2\in M_2$ then~$(1,g_2,\varphi(g_2)^{-1}) \in \Delta$ and there is a~$g_1$ such that~$(g_1,g_2,1)\in \Delta$.
Then~$(1,g_2,\varphi(g_2)^{-1}) (g_1,g_2,1)^{-1} = (g_1^{-1},1,\varphi(g_2)^{-1})$ so~$\varphi(g_2) \in M_3$. It follows by symmetry that all~$M_i$ are isomorphic and that~$\varphi|_{M_2}$ is an isomorphism from~$M_2$ to~$M_3$.
\end{proof}

Note that the canonical isomorphisms behave well with respect to composition. In particular we have~$\varphi^{i}_{j,k}= (\varphi^i_{k,j})^{-1}$ and~$ \varphi^{i}_{j,k}|_{M_j} \circ \varphi^{j}_{k,i}|_{M_k} = \varphi^{k}_{j,i}|_{M_j}$. This implies for example that the composition of the canonical isomorphism from~$M_1$ to~$M_2$ and the canonical isomorphism from~$M_2$ to~$M_3$ is exactly the canonical isomorphism from~$M_1$ to~$M_3$.
We can thus canonically identify the subgroups~$M_1$,~$M_2$ and~$M_3$ with a fixed subgroup~$M$. 

Moreover,  we can canonically associate the elements in~$H_1$ with the elements in~$\pi_2(H_1)$ and with the elements in~$\pi_3(H_1)$ by associating~$(1,g_2,\varphi(g_2)^{-1})\in H_1$ with the element~$g_2\in G_2$ and~$\varphi(g_2)$ in~$G_3$. Similarly we can associate elements~$(\varphi(g_3)^{-1},1,g_3)\in H_2$ with~$g_3\in G_3$ and~$\varphi(g_3)\in G_1$ and also associate~$(g_1,\varphi(g_1)^{-1},1)\in H_3$ with~$g_1\in G_1$ and~$\varphi(g_1)\in G_2$.

\begin{maintheoremrestated}
Let $\Delta \leq G_1\times G_2\times G_3$ be a 2-factor injective subdirect product. Then there is a normal subgroup of~$H\trianglelefteq \Delta$ with~$[\pi_i(\Delta) \colon \pi_i(H)] = [\Delta \colon H]$ for~$i\in\{1,2,3\}$ and~$H$ is isomorphic to a group of the following form: there are three finite groups~$H_1,H_2,H_3$ that all have an Abelian subgroup~$M$ contained in their the center such that~$H$ is isomorphic to the factor group of 
\[\{((h_2,{h'_3}^{-1}),(h_3,{h'_1}^{-1}),(h_1,{h'_2}^{-1}))\mid  h_i,h'_i\in H_i, h_i {h'_i}^{-1} \in M, h_1 {h'_1}^{-1}h_2 {h'_2}^{-1}h_3 {h'_3}^{-1}=1\},\] 
by the normal subgroup~$\{((m_1,m_1),(m_2,m_2),(m_3,m_3))\mid m_i\in M\}$. 
\end{maintheoremrestated}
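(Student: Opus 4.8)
The plan is to treat the two assertions separately: first the index identity, which is soft, and then the explicit isomorphism, which carries all the content. For the index identity I would observe that $\pi_i$ restricts to a surjection $\Delta\to\pi_i(\Delta)$ carrying $H$ onto $\pi_i(H)$, hence induces a surjection $\Delta/H\to\pi_i(\Delta)/\pi_i(H)$. Its kernel consists of the cosets $gH$ with $\pi_i(g)\in\pi_i(H)$; for such a $g$ there is $h\in H$ with $\pi_i(gh^{-1})=1$, so $gh^{-1}\in\ker(\pi_i)\cap\Delta=H_i\leq H$ and therefore $g\in H$. Thus the induced map is an isomorphism $\Delta/H\cong\pi_i(\Delta)/\pi_i(H)$, giving $[\Delta:H]=[\pi_i(\Delta):\pi_i(H)]$ for every $i$; that $H\trianglelefteq\Delta$ is already recorded before the statement.

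For the isomorphism I would first assemble the structural picture. Since the $H_i$ pairwise commute (Lemma~\ref{lem:hi:and:hj:commute}) and $H=\langle H_1,H_2,H_3\rangle$, every element of $H$ is a product $u_1u_2u_3$ with $u_i\in H_i$, and projecting gives $\pi_i(H)=\pi_i(H_j)\pi_i(H_k)$ (with $\{i,j,k\}=\{1,2,3\}$ and $\pi_i(H_i)=1$) where $[\pi_i(H_j),\pi_i(H_k)]=1$ and $\pi_i(H_j)\cap\pi_i(H_k)=M_i$. Hence $M_i$ is central in $\pi_i(H)$, so it is abelian and, transported through the projection isomorphisms, central in each $H_\ell$. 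Using the canonical isomorphisms of Lemma~\ref{lem:canonical:iso:between:groups} together with their compatibility under composition recorded after that lemma, I identify $M_1,M_2,M_3$ with a single abelian group $M$ sitting centrally in each of $H_1,H_2,H_3$; the compatibility is exactly what makes the two embeddings of $M$ into a fixed $H_\ell$ agree. This realises each $\pi_i(H)$ as the central product of copies of $H_j$ and $H_k$ amalgamated over $M$, which is the abstract shape of the $G_i$ in the statement.

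With this in hand I would define a homomorphism $\Psi$ from the displayed group $\widehat H$ to $\Delta$ coordinatewise, sending the generic element to the triple whose $i$-th coordinate is the product, inside $\pi_i(H)$, of the two central-product components prescribed by the two entries living in the $i$-th factor. The canonical isomorphisms turn each abstract pair $(h_\ell,h'_\ell)$ into the two coordinate values contributed by $H_\ell$, with the inverse built into the association $(1,g_2,\varphi(g_2)^{-1})$ accounting for the inverse on the primed entries. Each coordinate is then a product of two homomorphic images with commuting ranges, so $\Psi$ is a homomorphism; taking all $h'_\ell=h_\ell$ recovers precisely the genuine products $u_1u_2u_3$, so $\Psi$ is onto $H$, while varying each $h'_\ell$ within $h_\ell M$ realises exactly the ambiguity of the central-product decompositions. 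I would then compute $\ker\Psi$ and identify it with the diagonal copy of $M$, i.e.\ the subgroup $D$ by which we quotient, yielding $H\cong\widehat H/D$; the cardinality check $|\widehat H|=|H_1||H_2||H_3|\,|M|^2$ against $|H|=|H_1||H_2||H_3|/|M|$ and $|D|=|M|^3$ serves as a consistency test.

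The main obstacle is to pin down exactly which tuples $(h_\ell,h'_\ell)$ give a triple that actually lies in $\Delta$, and to see that this cuts out precisely the two stated conditions $h_\ell {h'_\ell}^{-1}\in M$ and $h_1{h'_1}^{-1}h_2{h'_2}^{-1}h_3{h'_3}^{-1}=1$. The first is forced because two central-product components can be shifted relative to one another only by an element of the amalgamating subgroup $M$; the second is the single global relation expressing that the three independent shifts, one per factor, must cancel. This is where 2-factor injectivity (guaranteeing the components are well defined) and the abelian, central nature of $M$ are indispensable: the apparent three-factor dependency among the coordinates collapses, after projecting into the $M_i$, into one equation in the abelian group $M$, which is the ``Abelian dependency'' announced in the introduction. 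Verifying necessity and sufficiency of these two conditions, with the signs dictated by the canonical isomorphisms tracked correctly, is the technical core of the argument.
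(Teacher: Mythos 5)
Your outline is correct, and at its core it is the paper's own argument run in the opposite direction, so the comparison is worth spelling out. The paper writes each component of an element of~$H$ as $g_i = c_ib_i^{-1}$ with $c_i \in C_i = \pi_i(H_{i+1})$, $b_i \in B_i = \pi_i(H_{i+2})$, and maps~$H$ \emph{into} the quotient $\widehat H/D$ via $(g_1,g_2,g_3)\mapsto((c_1,b_1^{-1}),(c_2,b_2^{-1}),(c_3,b_3^{-1}))$, proving well-definedness and injectivity; you instead build the surjection $\Psi\colon\widehat H\to H$ and compute $\ker\Psi = D$. These are the same isomorphism read in two directions, resting on identical ingredients (Lemma~\ref{lem:hi:and:hj:commute}, Lemma~\ref{lem:canonical:iso:between:groups}, and the compatibility of the canonical isomorphisms), so neither formulation buys more than the other, except that your direction makes surjectivity trivial (set $h'_\ell = h_\ell$) while shifting all the work into showing that $\Psi$ actually lands in~$H$ and that $\ker\Psi = D$. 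One genuine addition on your side: you prove the index claim $[\pi_i(\Delta):\pi_i(H)] = [\Delta:H]$, via the induced map $\Delta/H \to \pi_i(\Delta)/\pi_i(H)$ whose injectivity uses $\ker(\pi_i)\cap\Delta = H_i \leq H$; this is correct, and the paper's proof in fact never addresses this part of the statement at all, so your argument fills a real omission.

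The one caveat is that what you label ``the technical core'' and defer — necessity and sufficiency of the conditions $h_\ell {h'_\ell}^{-1}\in M$ and $h_1{h'_1}^{-1}h_2{h'_2}^{-1}h_3{h'_3}^{-1}=1$ — is precisely the bulk of the paper's proof, namely Equations~\eqref{eq:two-factor-eq} and~\eqref{eq:abelian-eq-system}. The paper derives them by multiplying a general $(g_1,g_2,g_3)\in H$ by the honest elements $(c_1^{-1},1,\varphi^2_{1,3}(c_1))\in H_2$ and $(b_1,\varphi^3_{1,2}(b_1^{-1}),1)\in H_3$ to kill the first coordinate, then reading off from 2-factor injectivity that the residual second and third entries lie in $\pi_2(H_1)$ and $\pi_3(H_1)$: this forces $b_3^{-1}\varphi^2_{1,3}(c_1)\in M_3$ (and cyclically, giving your ``shift'' condition) and, using that the $H_i$ pairwise commute, collapses the remaining dependency into the single relation $c_3b_2^{-1}\cdot c_1b_3^{-1}\cdot c_2b_1^{-1}=1$ in~$M$. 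Your conceptual justification for both conditions is the right one and the computation goes through along the lines you indicate, so this is not a wrong approach or a missing idea, but a complete write-up must execute it. Your warning about tracking inversions through the identifications is also warranted: the association $H_1\ni(1,g_2,\varphi(g_2)^{-1})\leftrightarrow g_2\leftrightarrow\varphi(g_2)$ absorbs an inversion in one coordinate, and even the paper's own injectivity step ($(c_1,b_1^{-1})=(m_1,m_1)$ yielding $g_1 = m_1m_1^{-1}=1$) is terse on exactly this bookkeeping, so your kernel computation must fix the convention before it can be checked against~$D$.
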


\begin{proof}
As before define~$H_i = \ker(\pi_i) = \{(g_1,g_2,g_3) \in \Delta\mid g_i = 1 \}$ and~$H = \langle H_1,H_2,H_3\rangle$.
By Lemma~\ref{lem:canonical:iso:between:groups} and the comment about the compatibility of the isomorphism between the groups we can canonically associate the elements of~$M_i$ with those of~$M_j$. Moreover we can assume that there is an Abelian group~$M$ that is isomorphic to the intersection of every pair of~$\{H_1,H_2,H_3\}$. All elements of~$H$ commute with all elements in such an intersection.

If~$(g_1,g_2,g_3)$ is an element of~$H$ 
then~$g_i$ can be written as~$c_i \cdot b_i^{-1}$ with~$c_i \in C_i \coloneqq \pi_i(H_{i+1})$ and~$b_i \in B_i \coloneqq \pi_{i}(H_{i+2})$, where indices will always be taken modulo 3.
We can set~$(g_1,g_2,g_3) = (c_1b_1^{-1},c_2b_2^{-1},c_3b_3^{-1})$.
Since~$c_1 \in \pi_1(H_2)$ we  conclude that~$(c_1^{-1},1, \varphi^2_{1,3}(c_1))\in H$.
We also have~$(b_1,\varphi^3_{1,2}({b_1})^{-1},1)\in H$. This implies that

 \[(g_1,g_2,g_3) (c_1^{-1},1,\varphi^2_{1,3}(c_1)) (b_1,\varphi^3_{1,2}({b_1}^{-1}),1) 
 = (1,c_2 {b_2}^{-1} \varphi^3_{1,2}({b_1}^{-1}), c_3 {b_3}^{-1}  \varphi^2_{1,3}(c_1)) \in H.\]
 
We thus see by looking at the third component that~$c_3 {b_3}^{-1}  \varphi^2_{1,3}(c_1) \in \pi_3(H_1)$ which implies that~${b_3}^{-1}  \varphi^2_{1,3}(c_1)\in \pi_3(H_1)$ and thus~${b_3}^{-1} \varphi^2_{1,3}(c_1) \in M_3$. By symmetry we conclude that
\begin{equation}
 \label{eq:two-factor-eq}
 {b_{i+1}}^{-1} \varphi^i_{{i-1},{i+1}}(c_{i-1}) \in M_{i+1} \text{ for } i\in \{1,2,3\}.
\end{equation}
Looking at the second component, we also see that~$ c_2 {b_2}^{-1}\varphi^3_{1,2}({b_1}^{-1}) \in \pi_2(H_1)$ and conclude that~$\varphi^1_{2,3}(c_2 {b_2}^{-1} \varphi^3_{1,2}({b_1}^{-1}))^{-1} = c_3 {b_3}^{-1}  \varphi^2_{1,3}(c_1)$.
Recalling that~$H_i$ and~$H_j$ commute for~$i\neq j$, we thus conclude that
\begin{equation}
 \label{eq:abelian-eq-system}
 c_3{\varphi^1_{2,3}(b_2)}^{-1} \cdot  \varphi^2_{1,3}(c_1){b_3}^{-1}\cdot \varphi^1_{2,3}(c_2 \varphi^3_{1,2} ({b_1}^{-1})) = 1.
\end{equation}
Thus, since all involved isomorphisms are compatible we can reinterpret this equation in~$M$ and we see that~$c_3{(b_2)}^{-1} \cdot  c_1({b_3})^{-1}\cdot c_2 ({b_1})^{-1}  = 1$.

Suppose that~$(g_1,g_2,g_3) = (\widehat{c_1} \widehat{b_1}^{-1}, \widehat{c_2} \widehat{b_2}^{-1},\widehat{c_3} \widehat{b_3}^{-1})$ is a second representation of the element~$(g_1,g_2,g_3)$ of~$H$. Then we have for the first component that~$\widehat{c_1} \widehat{b_1}^{-1} = c_1 {b_1}^{-1}$ and thus~$c_1^{-1}\widehat{c_1} = {b_1}^{-1} \widehat{b_1}\in M_1$ or, in other words,~$\widehat{c_1} m_1 = c_1$ and~$\widehat{b_1}^{-1}m_1 = b_1^{-1}$ for some element~$m_1\in M$. Similarly there are elements~$m_2$ and~$m_3$ for the other components.
We conclude that the map sending~$(g_1,g_2,g_3)$ to~$((c_1, {b_1}^{-1}), (c_2, {b_2}^{-1}),(c_3, {b_3}^{-1}))$ is a homomorphism from~$H$ to the group described in the theorem.

It remains to show injectivity of this homomorphism. Suppose~$(g_1,g_2,g_3)$ is mapped to the trivial element. This implies for the first component of the image~$((c_1, {b_1}^{-1}), (c_2, {b_2}^{-1}),(c_3, {b_3}^{-1}))$ that~$(c_1,{b_1}^{-1}) = (m_1,m_1)$ for some~$m_1$ implying that~$g_1 = m_1 {m_1}^{-1} = 1$. Repeating the same argument for the other components we see that the homomorphism is an isomorphism.
\end{proof}

\subsection{Correspondence theorems}

We now investigate the possibility of having a correspondence theorem in the style of~Theorem~\ref{thm:goursat:correspondence} for 3 factors.
As before, we can readily reduce to the case of~2-factor injective subdirect products.

\begin{lemma}
 \label{lem:3-factor-correspondence-to-2-factor-injective}
 There is a natural one-to-one correspondence between subdirect products of~$G_1\times G_2\times G_3$ and the tuples~$(N_1,N_2,N_3,\Delta')$, where $N_i \unlhd G_i$ for every $i \in \{1,2,3\}$ and $\Delta'$ is a 2-factor injective subdirect product of~$G_1/N_1\times G_2/N_2\times G_3/N_3$.
\end{lemma}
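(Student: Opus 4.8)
The plan is to establish the claimed one-to-one correspondence in Lemma~\ref{lem:3-factor-correspondence-to-2-factor-injective} by exhibiting two mutually inverse maps between subdirect products of~$G_1\times G_2\times G_3$ and tuples~$(N_1,N_2,N_3,\Delta')$ of the stated type. Most of the work has already been prepared by Lemma~\ref{lem:from-3-factor-to-2-factor-injective}, so the main task is to verify that the passage from a subdirect product to its associated tuple and the reverse construction are well defined and genuinely inverse to each other.

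First I would define the forward map. Given a subdirect product~$\Delta\leq G_1\times G_2\times G_3$, set~$N_i = \pi_i(\ker(\psi_i))$ as in Lemma~\ref{lem:from-3-factor-to-2-factor-injective}, and let~$\Delta' = \Delta/(N_1\times N_2\times N_3)$. I would first check that each~$N_i$ is normal in~$G_i$: since~$\ker(\psi_i)\trianglelefteq\Delta$ and~$\pi_i(\Delta)=G_i$, the image~$\pi_i(\ker(\psi_i))$ is normalized by~$\pi_i(\Delta)=G_i$, hence normal in~$G_i$. Then Lemma~\ref{lem:from-3-factor-to-2-factor-injective} directly supplies that~$\Delta'$ is a 2-factor injective subdirect product of~$G_1/N_1\times G_2/N_2\times G_3/N_3$, so the tuple~$(N_1,N_2,N_3,\Delta')$ is of the required form and the forward map is well defined.

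Next I would define the reverse map: given a tuple~$(N_1,N_2,N_3,\Delta')$ with~$N_i\trianglelefteq G_i$ and~$\Delta'$ a 2-factor injective subdirect product of~$G_1/N_1\times G_2/N_2\times G_3/N_3$, set
\[
\Delta \coloneqq \{(g_1,g_2,g_3)\in G_1\times G_2\times G_3 \mid (g_1N_1,g_2N_2,g_3N_3)\in\Delta'\}.
\]
This is the preimage of~$\Delta'$ under the canonical surjection~$G_1\times G_2\times G_3\to (G_1/N_1)\times(G_2/N_2)\times(G_3/N_3)$, so it is automatically a subgroup. Since~$\Delta'$ is subdirect and each quotient map~$G_i\to G_i/N_i$ is surjective, the projections~$\pi_i(\Delta)$ fill all of~$G_i$, so~$\Delta$ is a subdirect product of~$G_1\times G_2\times G_3$.

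The heart of the argument is showing these two maps are mutually inverse, and this is where I expect the main obstacle. Starting from a tuple and building~$\Delta$, I must recover exactly the same~$N_i$ and~$\Delta'$; the subtle direction is verifying that~$\pi_i(\ker(\psi_i))$ computed from the reconstructed~$\Delta$ equals the given~$N_i$. This amounts to showing that~$N_i/$-cosets collapse precisely along~$\ker(\psi_i)$, which should follow from the 2-factor injectivity of~$\Delta'$: an element of~$\ker(\psi_i)$ has trivial image in the other two quotient coordinates, and injectivity of the corresponding~$\psi_i$ on~$\Delta'$ forces its image in~$\Delta'$ to be trivial, so that exactly the~$N_i$-coset is recovered. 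Conversely, starting from~$\Delta$, forming the tuple, and reconstructing, the identity~$\Delta = \{(g_1,g_2,g_3)\mid(g_1N_1,g_2N_2,g_3N_3)\in\Delta'\}$ is exactly the second assertion of Lemma~\ref{lem:from-3-factor-to-2-factor-injective}, so no new work is needed there. Finally I would confirm that the correspondence is natural, i.e. compatible with the identifications, so that distinct tuples yield distinct subdirect products and vice versa, completing the bijection.
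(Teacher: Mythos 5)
Your proposal is correct and takes essentially the same route as the paper: both directions use exactly the paper's constructions ($N_i = \pi_i(\ker(\psi_i))$, $\Delta' = \Delta/(N_1\times N_2\times N_3)$, and the preimage construction), with Lemma~\ref{lem:from-3-factor-to-2-factor-injective} supplying the key facts. If anything you are more thorough than the paper's terse proof, which leaves the mutual-inverse verification---in particular that $\pi_i(\ker(\psi_i))$ computed from the reconstructed $\Delta$ recovers the given $N_i$ via the 2-factor injectivity of $\Delta'$---implicit.
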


\begin{proof}
 Let~$\Delta\leq  G_1\times G_2\times G_3$ be a subdirect product.
 Choose~$N_i = \pi_i(\ker(\psi_i))$ for~$i\in \{1,2,3\}$ and let~$\Delta' = \Delta/ (N_1\times N_2\times N_3)$.
 Then~$N_i \unlhd G_i$, because $\Delta$ is a subdirect product, and~$\Delta'$ is 2-factor injective by Lemma \ref{lem:from-3-factor-to-2-factor-injective}.
 
 Conversely, let~$N_i \unlhd G_i$ and let~$\Delta'$ be a 2-factor injective subdirect product of~$G_1/N_1\times G_2/N_2\times G_3/N_3$.
 Then~$\Delta = \{(g_1,g_2,g_3) \mid (g_1N_1,g_2N_2,g_3N_3) \in \Delta'\}$ is subdirect product of~$G_1 \times G_2 \times G_3$.
\end{proof}

Suppose~$\Delta$ is a 2-factor injective subdirect product. Then we can define for~$i\in\{1,2,3\}$ the groups~$H_i= \{(g_1,g_2,g_3) \in \Delta \mid g_i = 1\}$ and with them the groups $B_i = \pi_i(H_{i+2})$ and $C_i = \pi_i(H_{i+1})$. As we will see, the canonical isomorphism~$\varphi^i \coloneqq \varphi^{i+2}_{i,i+1}$ that exists by Lemma~\ref{lem:canonical:iso:between:groups} can be extended to an isomorphism from~$G_i/C_i$ to~$G_{i+1}/B_{i+1}$. We would like to have a correspondence theorem in the style of~Theorem~\ref{thm:goursat:correspondence} for 3 factors.
For this, in principle, we would like to relate the 2-factor injective subdirect products of~$G_1,G_2,G_3$ to the tuples~\[(B_1,B_2,B_3,C_1,C_2,C_3,\varphi_1,\varphi_2,\varphi_3)\] that satisfy certain consistency properties. However, in general, neither is it clear which consistency properties to choose so that every tuple corresponds to a subdirect product, nor do distinct subdirect products always correspond to distinct tuples. Indeed, in~\cite{GenGoursat} the authors describe two distinct Abelian subdirect products of the same group~$G_1\times G_2 \times G_3$ for which the corresponding tuples agree.

In the light of that we content ourselves with study two special cases, namely those where~$\Delta  = H$ and those where~$M_i = B_i\cap C_i = 1$.

\begin{theorem}
 \label{thm:correspondence-degenerate-non-diagonal}
 There is a natural one-to-one correspondence between subdirect products of~$\Delta  \leq G_1\times G_2\times G_3$ which are 2-factor injective satisfying~$\Delta = H$, 
 and tuples of the form~$(B_1,B_2,B_3,C_1,C_2,C_3,\varphi_1,\varphi_2,\varphi_3)$ for which for all~$i\in \{1,2,3\}$ (indices taken modulo 3) we have 
 \begin{enumerate}
  \item $B_i,C_i \trianglelefteq G_i$,
  \item $B_iC_i = G_i$,\label{item:non:diagonal}
  \item $B_i \stackrel{\varphi_i}{\cong} C_{i+1}$,
  \item $[B_i, C_i]= 1$,
  \item $\varphi_i(B_i\cap C_i) = B_{i+1}\cap C_{i+1}$,
  \item $\varphi_3|_{B_3\cap C_3}\circ\varphi_2|_{B_2\cap C_2}\circ \varphi_1|_{B_1\cap C_1} = \text{id}$.\label{item:concatenation}
 \end{enumerate}
\end{theorem}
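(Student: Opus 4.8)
The plan is to prove the stated bijection by giving the forward assignment $\Delta\mapsto(B_1,B_2,B_3,C_1,C_2,C_3,\varphi_1,\varphi_2,\varphi_3)$ and an explicit reverse construction, and then checking that the two are mutually inverse. A useful preliminary observation is that, for a fixed 2-factor injective subdirect product, condition~\ref{item:non:diagonal} is equivalent to $\Delta=H$: since $\pi_i(H)=\langle\pi_i(H_{i+1}),\pi_i(H_{i+2})\rangle=C_iB_i$ (using $\pi_i(H_i)=1$ and that $B_i$ and $C_i$ commute elementwise, so that $\langle B_i,C_i\rangle=B_iC_i$), and since Theorem~\ref{thm:characterization-two-factor-injective} gives $[\pi_i(\Delta):\pi_i(H)]=[\Delta:H]$, we have $B_iC_i=G_i$ for all $i$ exactly when $\pi_i(H)=\pi_i(\Delta)$ for all $i$, which happens exactly when $\Delta=H$.

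For the forward direction I set $H_i=\ker(\pi_i)\cap\Delta$, $B_i=\pi_i(H_{i+2})$, $C_i=\pi_i(H_{i+1})$, $M_i=B_i\cap C_i$ and $\varphi_i=\varphi^{i+2}_{i,i+1}$, and read off the six conditions from the earlier results. Normality in condition~(1) holds because $H_{i+1},H_{i+2}\trianglelefteq\Delta$ and $\pi_i\colon\Delta\to G_i$ is onto. Condition~(3) is Lemma~\ref{lem:canonical:iso:between:groups}, identifying $B_i=\pi_i(H_{i+2})$ with $C_{i+1}=\pi_{i+1}(H_{i+2})$; conditions~(5) and~\ref{item:concatenation} are, respectively, the statement of that lemma that $\varphi_i$ maps $M_i$ to $M_{i+1}$ and the compatibility of the canonical isomorphisms noted after it. Condition~(4) follows from Lemma~\ref{lem:hi:and:hj:commute} by projecting $[H_{i+1},H_{i+2}]=1$ to the $i$-th coordinate, and condition~\ref{item:non:diagonal} is the equivalence established above.

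For the reverse direction, given a tuple satisfying~(1)--(6), I define the candidate kernels exactly by the associations preceding Theorem~\ref{thm:characterization-two-factor-injective}, namely $H_1=\{(1,b_2,\varphi_2(b_2)^{-1})\mid b_2\in B_2\}$, $H_2=\{(\varphi_3(b_3)^{-1},1,b_3)\mid b_3\in B_3\}$ and $H_3=\{(b_1,\varphi_1(b_1)^{-1},1)\mid b_1\in B_1\}$, and set $\Delta=H_1H_2H_3$. Condition~(4) makes the three subgroups commute pairwise, so $\Delta$ is a subgroup whose general element is $(\varphi_3(b_3)^{-1}b_1,\;b_2\varphi_1(b_1)^{-1},\;\varphi_2(b_2)^{-1}b_3)$; projecting to the $i$-th coordinate gives $C_iB_i=G_i$ by condition~\ref{item:non:diagonal}, so $\Delta$ is subdirect. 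The essential point is 2-factor injectivity: to see that $\psi_3$ has trivial kernel I take an element whose first two coordinates are trivial, deduce $b_1=\varphi_3(b_3)\in M_1$ and hence $b_3\in M_3$ from condition~(5), and $b_2=\varphi_1(b_1)\in M_2$, and then conclude from the cyclic consequence $\varphi_2\circ\varphi_1\circ\varphi_3=\mathrm{id}_{M_3}$ of condition~\ref{item:concatenation} that $\varphi_2(b_2)=\varphi_2(\varphi_1(\varphi_3(b_3)))=b_3$, so that the third coordinate $\varphi_2(b_2)^{-1}b_3$ is trivial as well; cyclic symmetry handles $\psi_1$ and $\psi_2$.

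Finally I check that the two maps are inverse to one another. The map from tuples to groups composed with the forward map recovers the original tuple once one verifies that the subgroups $H_i$ defined above are exactly $\ker(\pi_i)\cap\Delta$ rather than merely contained in it: an element of $\Delta$ with trivial $i$-th coordinate lies, by the same membership relations in the $M_j$ used for injectivity, in the relevant intersections $M_{i+1},M_{i+2}$ on the surviving coordinates, and can then be rewritten in the single-generator form defining $H_i$ using the centrality of $M$ together with condition~\ref{item:concatenation}; with $H_i$ identified, the recovered $B_i,C_i,\varphi_i$ coincide with the originals. In the other direction the forward map followed by the reverse construction returns $\langle H_1,H_2,H_3\rangle=H=\Delta$. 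I expect the main obstacle to be precisely this reverse direction, and within it the two facts that the constructed group is 2-factor injective and that $\ker(\pi_i)\cap\Delta=H_i$: both rest on the compatibility conditions~(5) and~\ref{item:concatenation}, which force the three partial gluings encoded by $\varphi_1,\varphi_2,\varphi_3$ to close up consistently around the common central overlap $M$ so that no spurious elements enter the joint kernels, whereas the commutativity condition~(4) and the surjectivity condition~\ref{item:non:diagonal} are comparatively routine, securing the subgroup structure and subdirectness respectively.
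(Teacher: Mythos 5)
Your proof is correct, and your forward direction (including the observation that $\Delta = H$ is equivalent to Property~\ref{item:non:diagonal} via the index equality of Theorem~\ref{thm:characterization-two-factor-injective}) coincides with the paper's. Your reverse direction, however, takes a genuinely different route. The paper defines the candidate group equationally: $\Delta$ consists of all $(g_1,g_2,g_3)$ admitting decompositions $g_i = c_ib_i^{-1}$ with $b_i \in B_i$, $c_i \in C_i$, subject to the coset conditions $c_{i+1}M_{i+1} = \varphi_i(b_iM_i)$ and the Abelian relation of Equation~(\ref{eq:abelian-eq-system}); it must then verify that membership is independent of the chosen decomposition, check closure, subdirectness and 2-factor injectivity on that description, conclude $H = \Delta$ from Property~\ref{item:non:diagonal} together with Theorem~\ref{thm:characterization-two-factor-injective}, and finally cite Equations~(\ref{eq:two-factor-eq}) and~(\ref{eq:abelian-eq-system}) to see that every 2-factor injective $\Delta$ with $\Delta = H$ arises this way. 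You instead build $\Delta = H_1H_2H_3$ directly as a product of the three graph subgroups, which the commutation condition $[B_i,C_i]=1$ makes pairwise elementwise commuting; subdirectness is Property~\ref{item:non:diagonal}, 2-factor injectivity follows from $\varphi_i(B_i\cap C_i)=B_{i+1}\cap C_{i+1}$ together with the cyclic shift of Property~\ref{item:concatenation} exactly as you compute, and your verification that $\ker(\pi_i)\cap\Delta$ is no larger than the graph $H_i$ (centrality of $M$ plus Property~\ref{item:concatenation}) is precisely the step needed to recover the original tuple. Your route buys a construction whose well-definedness is immediate (no representation $g_i = c_ib_i^{-1}$ to quotient out) and makes the mutual-inverse check transparent, since forward followed by reverse returns $H_1H_2H_3 = \langle H_1,H_2,H_3\rangle = H = \Delta$; the paper's description buys a closed-form membership test for $\Delta$.

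One convention issue, which you inherit from the paper rather than introduce: if $\varphi_2$ is read as an honest isomorphism, your set $H_1 = \{(1,b_2,\varphi_2(b_2)^{-1}) \mid b_2\in B_2\}$ is closed under multiplication only when $C_3$ is Abelian, because the product $(1,b_2,\varphi_2(b_2)^{-1})(1,b_2',\varphi_2(b_2')^{-1})$ has third coordinate $\bigl(\varphi_2(b_2')\varphi_2(b_2)\bigr)^{-1}$ rather than $\varphi_2(b_2b_2')^{-1}$. The map $\varphi$ of Lemma~\ref{lem:canonical:iso:between:groups}, defined by the relation $(1,g_2,\varphi(g_2)^{-1})\in\Delta$, is in fact an anti-isomorphism on the non-Abelian part (only its restriction to $M$ is an isomorphism); under that convention your graphs are subgroups and every computation in your argument goes through verbatim. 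If one insists on genuine isomorphisms, one should take the plain graphs $\{(1,b_2,\varphi_2(b_2))\mid b_2 \in B_2\}$ instead, at the cost of Property~\ref{item:concatenation} becoming the requirement that the cyclic composition act as inversion on $M$ rather than the identity. Since the paper's own statement and proof carry the same ambiguity, this is a point to flag, not a gap in your argument.
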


\begin{proof}
 Let $\Delta \leq G_1 \times G_2 \times G_3$ be a subdirect product. Define $H_i \coloneqq \{(g_1,g_2,g_3) \in \Delta \mid g_i = 1\}$ for $i \in \{1,2,3\}$ and $H = \langle H_1,H_2,H_3 \rangle$. Suppose that $H = \Delta$.
 For $i \in \{1,2,3\}$, define $B_i = \pi_i(H_{i+2})$ and $C_i = \pi_i(H_{i+1})$.
 Clearly $B_i,C_i \unlhd G_i$.
 By Lemma \ref{lem:hi:and:hj:commute} we get that $[B_i,C_i] = 1$ and $B_iC_i = \pi_i(H)$.
 The assumption $\Delta =H$ implies $B_iC_i = G_i$.
 By Lemma \ref{lem:canonical:iso:between:groups} the groups $B_i$ and $C_{i+1}$ are isomorphic via an isomorphism $\varphi_i = \varphi_{i,i+1}^{i+2}$, which maps $M_i = B_i \cap C_i$ to $M_{i+1} = B_{i+1} \cap C_{i+1}$.
 Finally Property~\ref{item:concatenation} follows directly from the comment below Lemma \ref{lem:canonical:iso:between:groups}.
 This gives us the tuple~$(B_1,B_2,B_3,C_1,C_2,C_3,\varphi_1,\varphi_2,\varphi_3)$ with the desired properties.
 
 On the other hand suppose we are given a tuple~$(B_1,B_2,B_3,C_1,C_2,C_3,\varphi_1,\varphi_2,\varphi_3)$ with the desired properties.
 Let $M_i = B_i \cap C_i$.
 Define
 \[
  \Delta = \left\{(g_1,g_2,g_3) \in G_1 \times G_2 \times G_3 \;\bigg|\; \begin{aligned}
                                                                          &g_i = c_ib_i^{-1} \text{ for } b_i \in B_i,c_i \in C_i,\; c_{i+1}M_{i+1} = \varphi_i(b_iM_i)\\
                                                                          &\text{and } c_3{\varphi_{2}(b_2)}^{-1} \cdot  \varphi_{3}^{-1}(c_1){b_3}^{-1}\cdot \varphi_{2}(c_2 \varphi_{1} ({b_1}^{-1})) = 1
                                                                         \end{aligned}\right\}.
 \]
 For $i \in \{1,2,3\}$ suppose $g_i = c_ib_i^{-1} = c_i'b_i'^{-1}$.
 Then there is some $m_i \in M_i$ with $b_i = b_i'm_i$ and $c_i = c_i'm_i$.
 Hence, $b_iM_i = b_i'M_i$ and $c_iM_i = c_i'M_i$.
 Furthermore, we have
 \begin{align*}
      &c_3'{\varphi_{2}(b_2')}^{-1} \cdot  \varphi_{3}^{-1}(c_1'){b_3'}^{-1}\cdot \varphi_{2}(c_2' \varphi_{1} ({b_1'}^{-1}))\\
  =\; &c_3m_3^{-1}{\varphi_{2}(b_2m_2^{-1})}^{-1} \cdot  \varphi_{3}^{-1}(c_1m_1^{-1}){(b_3m_3^{-1})}^{-1}\cdot \varphi_{2}(c_2m_2^{-1} \varphi_{1} ({(b_1m_1^{-1})}^{-1}))\\
  =\; &c_3{\varphi_{2}(b_2)}^{-1} \cdot  \varphi_{3}^{-1}(c_1){b_3}^{-1}\cdot \varphi_{2}(c_2 \varphi_{1} ({b_1}^{-1})),
 \end{align*}
 so the membership in $\Delta$ is independent from the representation of $g_i \in G_i$.
 Also, it is easy to check that $\Delta$ is closed under multiplication because $[B_i, C_i]= 1$.
 The group $\Delta$ is a subdirect product, since for $g_1 = c_1b_1^{-1}$ we have $(c_1b_1^{-1},\varphi_1(b_1^{-1})^{-1}b_2^{-1},\varphi_{3}^{-1}(c_1)\varphi_2(b_2)) \in \Delta$, and it can be checked that the group is 2-factor injective.
 Define $H_i = \{(g_1,g_2,g_3) \in \Delta \mid g_i = 1\}$ for $i \in \{1,2,3\}$ and $H = \langle H_1,H_2,H_3 \rangle$.
 Then $B_i = \pi_i(H_{i+2})$ and $C_i = \pi_i(H_{i+1})$, which means that $H = \Delta$ by Property~\ref{item:non:diagonal} and Theorem~\ref{thm:characterization-two-factor-injective}.
 Finally, it can be checked that $\varphi_i = \varphi_{i,i+1}^{i+2}$.
 
 It remains to show that for each 2-factor injective subdirect product $\Delta \leq G_1 \times G_2 \times G_3$ with~$\Delta = H$, the group~$\Delta$ is of the form described above.
 But this follows from Theorem \ref{thm:characterization-two-factor-injective}, Equation (\ref{eq:two-factor-eq}) and (\ref{eq:abelian-eq-system}).
\end{proof}

The previous theorem shows that for the subdirect products with~$\Delta = H$ we can devise a correspondence theorem. 
As a second case, on the other end of the spectrum, we can also devise a correspondence theorem if the Abelian part that interlinks the three components is trivial. In fact this case corresponds to the case discussed by Bauer, Sen, and Zvengrowski~\cite[5.1 Remark]{GenGoursat}, who already suspect that a theorem like the previous can be obtained. We remark that
Example \ref{ex:degenerate} from the previous section is of this form.
In fact, we can already conclude from Theorem \ref{thm:correspondence-degenerate-non-diagonal} that for every group $\Delta$, where the Abelian part interlinking the components is trivial, the group $H$ essentially has the form of Example \ref{ex:degenerate}.

\begin{definition}
 Let $\Delta$ be a subdirect product of $G_1 \times G_2 \times G_3$.
 We say $\Delta$ is \emph{degenerate} if $\pi_i(\ker(\pi_{i+1})) \cap \pi_i(\ker(\pi_{i+2})) = \pi_i(\ker(\psi_i))$ (i.e.\ $M_i = 1$) for some, and thus every, $i \in \{1,2,3\}$.
\end{definition}

\begin{lemma}
 \label{la:trivial-mi-upper-bound-construction}
 For $i \in \{1,2,3\}$ let $B_i,C_i \unlhd G_i$, such that $B_i \cap C_i = 1$ and $[B_i,C_i] = 1$.
 Furthermore let $G_i/C_i \stackrel{\varphi_i}{\cong} G_{i+1}/B_{i+1}$ and suppose $\varphi_i(B_iC_i) = C_{i+1}B_{i+1}$ and $\varphi_3(\varphi_2(\varphi_1(g_1B_1C_1))) = g_1B_1C_1$ for all $g_1 \in G_1$.
 Define
 \begin{equation}
  \label{eq:trivial-m-i}
  \Delta = \{(g_1,g_2,g_3) \in G_1 \times G_2 \times G_3 \mid \varphi_i(g_iC_i) = g_{i+1}B_{i+1}\}.
 \end{equation}
 Then $\pi_i(\Delta) = G_i$ and $\Delta$ is a degenerate 2-factor injective subdirect product.
\end{lemma}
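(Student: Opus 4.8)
The plan is to verify the four assertions in turn: that $\Delta$ is a subgroup, that it is subdirect, that it is $2$-factor injective, and that it is degenerate. Throughout I would exploit the structural consequences of the hypotheses $B_i\cap C_i = 1$ and $[B_i,C_i]=1$: the product $B_iC_i$ is an internal direct product $B_i\times C_i$, so the natural map $B_i\to B_iC_i/C_i$ is an isomorphism, and likewise $C_{i+1}\to C_{i+1}B_{i+1}/B_{i+1}$. Combined with the assumption $\varphi_i(B_iC_i) = C_{i+1}B_{i+1}$, the isomorphism $\varphi_i$ then restricts to an isomorphism $\beta_i\colon B_i\to C_{i+1}$ characterised by $\varphi_i(bC_i) = \beta_i(b)B_{i+1}$ for $b\in B_i$. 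Moreover $\varphi_i$ descends to an isomorphism $\bar\varphi_i\colon G_i/(B_iC_i)\to G_{i+1}/(B_{i+1}C_{i+1})$, and the hypothesis $\varphi_3(\varphi_2(\varphi_1(g_1B_1C_1)))=g_1B_1C_1$ says precisely that $\bar\varphi_3\bar\varphi_2\bar\varphi_1 = \mathrm{id}$. That $\Delta$ is a subgroup is then immediate, since each $\varphi_i$ is a homomorphism and each $B_{i+1}$ is normal, so the three defining coset identities are preserved under multiplication and inversion.

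The main obstacle is subdirectness, i.e.\ surjectivity of $\pi_1$ (the other two follow by the cyclic symmetry of the construction). The difficulty is that the first defining equation only constrains $g_2$ modulo $B_2$, whereas the second equation refers to $g_2$ modulo $C_2$, so the three conditions are not obviously simultaneously solvable. Given $g_1\in G_1$, I would first choose any representative $g_2$ of the coset $\varphi_1(g_1C_1)$ and then any representative $g_3^{0}$ of $\varphi_2(g_2C_2)$; the first two equations then hold for $(g_1,g_2,g_3^{0})$ by construction. The third equation need not hold yet, but using $\bar\varphi_3\bar\varphi_2\bar\varphi_1=\mathrm{id}$ one sees that the coset $\varphi_3(g_3^{0}C_3)$ is contained in $g_1B_1C_1$, hence equals $g_1c_1B_1$ for some $c_1\in C_1$. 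I would then replace $g_3^{0}$ by $g_3 = g_3^{0}\beta_3^{-1}(c_1^{-1})$. Since $\beta_3^{-1}(c_1^{-1})\in B_3$, this does not change the $B_3$-coset of $g_3$, so the second equation still holds; and it multiplies $\varphi_3(g_3^{0}C_3)$ by $\beta_3(\beta_3^{-1}(c_1^{-1}))B_1 = c_1^{-1}B_1$, turning the third equation into $g_1c_1c_1^{-1}B_1 = g_1B_1$, as required. Thus $(g_1,g_2,g_3)\in\Delta$ and $\pi_1(\Delta)=G_1$. The crux here is the surjectivity of $\beta_3\colon B_3\to C_1$, which supplies exactly the freedom needed to absorb the $C_1$-discrepancy coming from the coarser composition identity.

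For $2$-factor injectivity it suffices to show one of the $\psi_i$, say $\psi_3$, is injective, the rest following again by symmetry. If $(g_1,g_2,g_3)$ and $(g_1,g_2,g_3')$ both lie in $\Delta$, then $\varphi_2(g_2C_2)=g_{3}B_{3}=g_3'B_3$ gives $g_3^{-1}g_3'\in B_3$, while $\varphi_3(g_3C_3)=g_1B_1=\varphi_3(g_3'C_3)$ together with injectivity of $\varphi_3$ gives $g_3^{-1}g_3'\in C_3$; hence $g_3^{-1}g_3'\in B_3\cap C_3 = 1$ and $g_3=g_3'$.

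Finally, for degeneracy I would identify $H_{i+1}$ and $H_{i+2}$ explicitly and show $\pi_i(H_{i+1})=C_i$ and $\pi_i(H_{i+2})=B_i$, whence $M_i = B_i\cap C_i = 1$. Concretely, setting $g_2=1$ in the defining equations forces $g_1\in C_1$ and $g_3\in B_3$, using that $\varphi_i$ maps the trivial coset to the trivial coset; conversely every $g_1\in C_1$ extends to such a triple because $\varphi_3(B_3C_3/C_3)=C_1B_1/B_1$, so $\pi_1(H_2)=C_1$, and the computation for $\pi_1(H_3)=B_1$ is entirely analogous. Since $B_i\cap C_i=1$ by hypothesis, all $M_i$ are trivial and $\Delta$ is degenerate.
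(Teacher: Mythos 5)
Your proof is correct, and its overall plan coincides with the paper's: direct verification of closure, subdirectness, and $2$-factor injectivity, with your $\beta_i$ being exactly the paper's restriction isomorphisms $\varphi_i^{*}\colon B_i \to C_{i+1}$. Two steps genuinely differ in execution. For subdirectness the paper argues in two stages: it first exhibits explicit triples $(b_1c_1,\, b_2\varphi_1^{*}(b_1),\, (\varphi_3^{*})^{-1}(c_1)\varphi_2^{*}(b_2))$ showing $B_1C_1 \leq \pi_1(\Delta)$, and then runs over a transversal of $E_1 = B_1C_1$ in $G_1$, fixing the first and third defining equations and correcting the \emph{middle} coordinate by an element of $B_2$ to satisfy the second; you instead satisfy the first two equations by choosing representatives and correct the \emph{third} coordinate by $\beta_3^{-1}(c_1^{-1}) \in B_3$, handling all of $G_1$ in one pass. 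Both corrections rest on the same mechanism — the composition hypothesis only controls things modulo $B_1C_1$, and surjectivity of the restricted isomorphism onto $C_1$ absorbs the residual discrepancy — but your single-step version avoids the transversal and is, if anything, cleaner. Second, you explicitly verify degeneracy by computing $\pi_1(H_2) = C_1$ and $\pi_1(H_3) = B_1$, whence $M_1 = B_1 \cap C_1 = 1$; the paper's proof ends after $2$-factor injectivity and leaves this claim implicit (its explicit triples yield the inclusions $C_1 \subseteq \pi_1(H_2)$ and $B_1 \subseteq \pi_1(H_3)$, but the reverse inclusions, which you supply via the observation that $g_2 = 1$ forces $g_1 \in C_1$ and $g_3 \in B_3$, are what is actually needed for $M_1 = 1$). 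Your injectivity argument, comparing two triples with equal first two coordinates rather than computing $\ker \psi_1$, is an equivalent reformulation of the paper's kernel computation.
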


\begin{proof}
 We first show, that $\Delta$ is closed under multiplication.
 Let $(g_1,g_2,g_3),(g_1',g_2',g_3') \in \Delta$.
 Then $\varphi_i(g_ig_i'C_i) = \varphi_i(g_iC_i)\varphi_i(g_i'C_i) = g_{i+1}g_{i+1}'B_{i+1}$ for all $i \in \{1,2,3\}$, so $(g_1g_1',g_2g_2',g_3g_3') \in \Delta$.
 Let $E_1 = \langle B_1,C_1 \rangle$ and pick $e_1 \in E_1$.
 The element $e_1$ can uniquely be written as $e_1 = b_1c_1$ with $b_1 \in B_1, c_1 \in C_1$.
 For each $i \in \{1,2,3\}$ define $\varphi_i^{*}\colon B_i \rightarrow C_{i+1}$ with $\varphi_i^{*}(b_i) = c_{i+1}$ for the unique $c_{i+1} \in C_{i+1}$ with $\varphi_i(b_iC_i) = c_{i+1}B_{i+1}$.
 Then $(b_1c_1, b_2\varphi_1^{*}(b_1),{(\varphi_3^{*})}^{-1}(c_1)\varphi_2^{*}(b_2)) \in \Delta$.
 So $E_1 \leq \pi_1(\Delta)$.
 The argument for the other components is analogous.
 Now let $n_1^{1},\dots,n_t^{1}$ be a transversal of $E_1$ in $G_1$.
 Let $n_i^{2}B_2 = \varphi_1(n_i^{1}C_1)$ and $n_i^{3}C_3 = \varphi_3^{-1}(n_i^{1}B_1)$ for $i \in \{1,\dots,t\}$.
 Then $\varphi_2(n_i^{2}C_2) \subseteq n_i^{3}E_3$ and hence there is some $b_i^{2} \in B_2$ with $\varphi_2(n_i^{2}b_i^{2}C_2) = n_1^{3}B_3$.
 So $(n_i^{1},n_i^{2}b_i^{2},n_i^{3}) \in \Delta$ and $G_1 \leq \pi_1(\Delta)$.
 
 It remains to prove that $\Delta$ is 2-factor injective.
 Let $(g_1,g_2,g_3) \in \Delta$ with $g_2 = g_3 = 1$.
 Then $g_1 \in B_1$, because $\varphi_3(C_3) = B_1 = g_1B_1$, and $g_1 \in C_1$, because $\varphi_1(g_1C_1) = g_2B_2 = B_2$.
 So $g_1 = 1$.
 Again, the argument for the other components is analogous.
\end{proof}

\begin{lemma}
\label{la:trivial-mi-upper-bound}
 Let $\Delta$ be a 2-factor injective subdirect product of $G_1 \times G_2 \times G_3$.
 Furthermore, let $H_i = \{(g_1,g_2,g_3) \in \Delta \mid g_i = 1\}$ for $i \in \{1,2,3\}$ and $H = \langle H_1,H_2,H_3 \rangle$.
 Define $B_i = \pi_i(H_{i+2})$ and $C_i = \pi_i(H_{i+1})$.
 Suppose $B_i \cap C_i = 1$ for all $i \in \{1,2,3\}$.\\
 Then there are canonical isomorphisms~$\varphi_1,\varphi_2,\varphi_3$ with $G_i/C_i \stackrel{\varphi_i}{\cong} G_{i+1}/B_{i+1}$ and $\varphi_i(B_iC_i) = C_{i+1}B_{i+1}$ such that $\varphi_3(\varphi_2(\varphi_1(g_1B_1C_1))) = g_1B_1C_1$ for all $g_1 \in G_1$.
 Furthermore $\Delta$ is given by Equation (\ref{eq:trivial-m-i}).
\end{lemma}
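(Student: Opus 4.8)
The plan is to manufacture the three isomorphisms by applying Goursat's Lemma to the three pairwise projections of $\Delta$, to read off the stated compatibility conditions from the fact that every element of $\Delta$ satisfies all three pairwise Goursat relations simultaneously, and finally to identify $\Delta$ with the group of Equation~(\ref{eq:trivial-m-i}) by a containment argument supplemented by a size count.

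First I would exploit 2-factor injectivity: the projection $\psi_{i+2}\colon\Delta\to G_i\times G_{i+1}$ is injective, so $\Delta\cong\psi_{i+2}(\Delta)$, a subdirect product of $G_i\times G_{i+1}$. Applying Goursat's Lemma to this two-factor subdirect product, I compute its distinguished normal subgroups: the elements with trivial $(i+1)$-st coordinate project in the $i$-th coordinate to $\pi_i(H_{i+1})=C_i$, while the elements with trivial $i$-th coordinate project in the $(i+1)$-st coordinate to $\pi_{i+1}(H_i)=B_{i+1}$. Hence Goursat yields an isomorphism $\varphi_i\colon G_i/C_i\to G_{i+1}/B_{i+1}$ characterised by $(g_i,g_{i+1})\in\psi_{i+2}(\Delta)$ if and only if $\varphi_i(g_iC_i)=g_{i+1}B_{i+1}$. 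One checks that $\varphi_i$ is the canonical extension of the isomorphism $\varphi^{i+2}_{i,i+1}$ from Lemma~\ref{lem:canonical:iso:between:groups}, so the $\varphi_i$ are indeed canonical.

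Next I would verify the two conditions on the $\varphi_i$. By Lemma~\ref{lem:hi:and:hj:commute} we have $[B_i,C_i]=1$, and since $\pi_i(H_i)=1$ the commuting normal subgroups $B_i,C_i$ satisfy $B_iC_i=\pi_i(H)$. For the image condition, an element $g_i$ lies in $B_iC_i=\pi_i(H)$ exactly when it is the $i$-th coordinate of some $h\in H$; its $(i+1)$-st coordinate then lies in $\pi_{i+1}(H)=B_{i+1}C_{i+1}$ and is tied to $g_i$ by the Goursat relation, giving $\varphi_i(B_iC_i)=C_{i+1}B_{i+1}$. For the cocycle condition, I use that every $(g_1,g_2,g_3)\in\Delta$ satisfies all three relations $\varphi_i(g_iC_i)=g_{i+1}B_{i+1}$ at once; chaining them through $G_1/C_1\to G_2/B_2\to\cdots$ and passing to the induced maps on the further quotients by $B_iC_i$ sends $g_1B_1C_1$ back to itself, and subdirectness $\pi_1(\Delta)=G_1$ makes this hold for every $g_1$, which is precisely $\varphi_3(\varphi_2(\varphi_1(g_1B_1C_1)))=g_1B_1C_1$.

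Finally I would show that $\Delta$ equals the group $\Delta'$ of Equation~(\ref{eq:trivial-m-i}). The inclusion $\Delta\subseteq\Delta'$ is immediate, as each element of $\Delta$ satisfies the three defining relations. For the reverse inclusion I would take $(g_1,g_2,g_3)\in\Delta'$, use subdirectness to pick $(g_1,\tilde g_2,\tilde g_3)\in\Delta$ with the same first coordinate, and prove that the quotient $(1,\tilde g_2^{-1}g_2,\tilde g_3^{-1}g_3)$ lies in $H_1\le\Delta$; the relations force $\tilde g_2^{-1}g_2\in B_2$ and $\tilde g_3^{-1}g_3\in C_3$ with $\tilde g_3^{-1}g_3\,B_3=\varphi_2(\tilde g_2^{-1}g_2\,C_2)$, and here the hypothesis $B_3\cap C_3=1$ is crucial, since it forces this third coordinate to be the unique element of $C_3$ in its $B_3$-coset, which is exactly the value prescribed by $\varphi^1_{2,3}$, placing the element in $H_1$. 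Alternatively, since $\Delta'$ is by Lemma~\ref{la:trivial-mi-upper-bound-construction} a 2-factor injective subdirect product with the same data, Goursat gives $|\Delta|=|G_1|\,|B_2|=|\Delta'|$, so $\Delta\subseteq\Delta'$ forces equality in the finite case. I expect this reverse inclusion --- pinning the difference element down into $H_1$ --- to be the main obstacle, and the degeneracy hypothesis $B_i\cap C_i=1$ to be precisely what makes it succeed.
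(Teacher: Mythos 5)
Your proposal is correct, and it has the same overall skeleton as the paper's proof: define the $\varphi_i$ through membership of pairs of coordinates in $\Delta$, read off $\varphi_i(B_iC_i)=C_{i+1}B_{i+1}$ from elements of $H_{i+2}$, obtain the cocycle identity by chaining the three relations on a single $(g_1,g_2,g_3)\in\Delta$ modulo the $B_iC_i$, and finally prove $\Delta=\Delta'$ via the trivial inclusion $\Delta\subseteq\Delta'$ together with a ``match the first coordinate by subdirectness, then adjust by an element of $H_1$'' argument. The differences are in packaging, and both work in your favor. First, where you simply cite Goursat's Lemma for $\psi_{i+2}(\Delta)\leq G_i\times G_{i+1}$ (correctly identifying the Goursat kernels as $C_i=\pi_i(H_{i+1})$ and $B_{i+1}=\pi_{i+1}(H_i)$), the paper verifies by hand that $g_iC_i\mapsto g_{i+1}B_{i+1}$ is well defined (multiplying by an element $(c,1,h_2)\in H_2$) and injective (using an element $(1,b_2,h_3)\in H_1$) --- that is, it silently re-derives the Goursat correspondence for the pairwise image, so your citation is a legitimate shortening. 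Second, in the endgame the paper invokes the 2-factor injectivity of $\Delta'$ supplied by Lemma~\ref{la:trivial-mi-upper-bound-construction}; note that applying that lemma presupposes the cocycle condition, which the paper only establishes in its final paragraph, so your ordering (cocycle condition before the identification $\Delta=\Delta'$) actually repairs a slight infelicity in the paper's exposition. Your primary version of the reverse inclusion, pinning $(1,\tilde g_2^{-1}g_2,\tilde g_3^{-1}g_3)$ into $H_1$ directly from $B_3\cap C_3=1$, is the same mathematics as the paper's route, since the injectivity proof inside Lemma~\ref{la:trivial-mi-upper-bound-construction} is exactly that intersection argument --- but yours is self-contained at the element level. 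Your alternative counting argument $|\Delta|=|G_1|\,|B_2|=|\Delta'|$ is also sound, though, as you note, it needs finiteness, which neither your element-level argument nor the paper's requires.
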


\begin{proof}
 For $i \in \{1,2,3\}$ define a homomorphism $\varphi_i\colon G_i/C_i \rightarrow G_{i+1}/B_{i+1}$ by setting $\varphi_i(g_iC_i) = g_{i+1}B_{i+1}$ if  $(g_1,g_2,g_3) \in \Delta$ for some~$g_i\in G_i$.
 We first have to show that $\varphi_i$ is well-defined.
 Without loss of generality consider $i=1$ and let $(g_1,g_2,g_3), (g_1',g_2',g_3') \in \Delta$ with $g_1C_1 = g_1'C_1$.
 Then there is a $(c,1,h_2) \in \Delta$ with $g_1'c = g_1$.
 We obtain $(g_1',g_2',g_3')(c,1,h_2)(g_1,g_2,g_3)^{-1} = (1,g_2'g_2^{-1},g_3'')$ for some $g_3'' \in G_3$ and hence, $g_2B_2 = g_2'B_2$.
 So $\varphi_i$ is well-defined. Since~$\Delta$ is a subdirect product, ~$\varphi_i$ is a surjective homomorphism.
 Suppose $\varphi_1(g_1C_1) = B_2$. Then $(g_1c_1, b_2, g_3) \in \Delta$ for some $c_1 \in C_1, b_2 \in B_2$ and $g_3 \in G_3$.
 Also there is $h_3 \in G_3$ with $(1,b_2,h_3) \in \Delta$ and hence, $(g_1c_1,1,g_3h_3^{-1}) \in \Delta$ implying that $g_1 \in C_1$.
 So $G_i/C_i \stackrel{\varphi_i}{\cong} G_{i+1}/B_{i+1}$.
 
 For every $b_1 \in B_1$ there is a $c_2 \in C_2$ with $(b_1,c_2,1) \in \Delta$ and $\varphi_1(b_1C_1) = c_2B_2 \in C_2B_2$.
 By symmetry it follows that $\varphi_i(B_iC_i) = C_{i+1}B_{i+1}$ for all $i \in \{1,2,3\}$.
 Now let $\Delta'$ be the group defined in Equation (\ref{eq:trivial-m-i}).
 Clearly $\Delta \leq \Delta'$ by the definition of $\varphi_i$ for $i \in \{1,2,3\}$.
 So let $(g_1',g_2',g_3') \in \Delta'$.
 Since $\Delta$ is subdirect there is a $(g_1',g_2,g_3) \in \Delta$ with $g_2B_2 = g_2'B_2$.
 So we can assume that $g_2 = g_2'$.
 But then, by 2-factor injectivity of $\Delta'$, we get that $g_3 = g_3'$.
 
 Finally for $(g_1,g_2,g_3) \in \Delta$ we have that $\varphi_i(g_iB_iC_i) = \varphi_i(g_iC_i)\varphi_i(B_iC_i) = g_{i+1}C_{i+1}B_{i+1} = g_{i+1}B_{i+1}C_{i+1}$.
 So $\varphi_2(\varphi_1(g_1B_1C_1)) = \varphi_3^{-1}(g_1B_1C_1)$ for all $g_1 \in G_1$.
\end{proof}

\begin{theorem}
 \label{thm:correspondence-degenerate-two-factor-injective}
 There is a natural one-to-one correspondence between degenerate 2-factor injective subdirect products of $G_1 \times G_2 \times G_3$
 and the tuples $(B_1,B_2,B_3,C_1,C_2,C_3,\varphi_1,\varphi_2,\varphi_3)$ for which for all~$i\in \{1,2,3\}$ (indices taken modulo 3) we have 
 \begin{enumerate}
  \item $B_i,C_i \trianglelefteq G_i$,
  \item $B_i \cap C_i = 1$,
  \item $[B_i, C_i]= 1$,
  \item $G_i/C_i \stackrel{\varphi_i}{\cong} G_{i+1}/B_{i+1}$,
  \item $\varphi_i(B_iC_i) = C_{i+1}B_{i+1}$,
  \item $\varphi_3(\varphi_2(\varphi_1(g_1B_1C_1))) = g_1B_1C_1$ for all $g_1 \in G_1$.
 \end{enumerate}
\end{theorem}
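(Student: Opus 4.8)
The plan is to assemble the two preceding lemmas into the two directions of the correspondence and then to verify that the resulting maps are mutually inverse. First I would set up the map from groups to tuples. Given a degenerate 2-factor injective subdirect product $\Delta \leq G_1\times G_2\times G_3$, define as usual $H_i = \{(g_1,g_2,g_3)\in\Delta\mid g_i=1\}$, $B_i = \pi_i(H_{i+2})$, and $C_i = \pi_i(H_{i+1})$. Properties~1 and~3 of the tuple are immediate: $B_i,C_i \trianglelefteq G_i$ since $\Delta$ is subdirect, and $[B_i,C_i]=1$ by Lemma~\ref{lem:hi:and:hj:commute}. Property~2 is exactly the degeneracy assumption $M_i = B_i\cap C_i = 1$. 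The isomorphisms $\varphi_i$ together with Properties~4,~5, and~6 are then supplied verbatim by Lemma~\ref{la:trivial-mi-upper-bound}, which moreover tells us that $\Delta$ itself is recovered as the group defined by Equation~(\ref{eq:trivial-m-i}).

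Second, for the map from tuples to groups, I would take a tuple satisfying the six properties and define $\Delta$ by Equation~(\ref{eq:trivial-m-i}). Lemma~\ref{la:trivial-mi-upper-bound-construction} applies directly, since its hypotheses are precisely Properties~1--6, and it guarantees that $\Delta$ is a degenerate 2-factor injective subdirect product; hence this map is well defined.

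It remains to check that the two maps invert each other. Starting from a group $\Delta$, passing to its tuple and back, Lemma~\ref{la:trivial-mi-upper-bound} already asserts that the group reconstructed from the extracted data via~(\ref{eq:trivial-m-i}) equals $\Delta$, so this round trip is the identity. The reverse round trip is where I expect the only genuine work: starting from a tuple, forming $\Delta$ via~(\ref{eq:trivial-m-i}), and extracting $B_i' = \pi_i(H_{i+2})$, $C_i' = \pi_i(H_{i+1})$, and the canonical isomorphisms $\varphi_i'$, I must verify that $(B_i', C_i', \varphi_i') = (B_i, C_i, \varphi_i)$. The containments $B_i\subseteq B_i'$ and $C_i\subseteq C_i'$ follow from the explicit elements of $\Delta$ exhibited in the proof of Lemma~\ref{la:trivial-mi-upper-bound-construction} (for instance the element $(b_1c_1, b_2\varphi_1^{*}(b_1),\ldots)$ witnesses that $b_1c_1\in\pi_1(\Delta)$ and, specialized appropriately, that $B_1,C_1$ arise from the $H_i$). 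The reverse containments follow from 2-factor injectivity together with the defining congruences $\varphi_i(g_iC_i)=g_{i+1}B_{i+1}$, which force an element of $H_{i+2}$ projecting nontrivially into the $i$-th coordinate to lie in $B_i$. Once $B_i'=B_i$ and $C_i'=C_i$, the equality $\varphi_i'=\varphi_i$ is immediate, since both are characterized by $\varphi_i(g_iC_i)=g_{i+1}B_{i+1}$ for $(g_1,g_2,g_3)\in\Delta$, which is exactly the defining relation in~(\ref{eq:trivial-m-i}).

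The main obstacle is thus not a single deep step but the bookkeeping in this reverse round trip: confirming that the subgroups and isomorphisms read off from the constructed $\Delta$ are \emph{exactly} the data we started with, rather than merely compatible with them. Naturality of the correspondence is then automatic, since every object in the construction ($H_i$, $B_i$, $C_i$, $\varphi_i$) is defined canonically from $\Delta$ with no arbitrary choices.
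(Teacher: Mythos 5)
Your proposal is correct and follows essentially the same route as the paper, whose proof of Theorem~\ref{thm:correspondence-degenerate-two-factor-injective} simply combines Theorem~\ref{thm:characterization-two-factor-injective} with Lemmas~\ref{la:trivial-mi-upper-bound-construction} and~\ref{la:trivial-mi-upper-bound}. You merely make explicit the round-trip bookkeeping (that the tuple extracted from the constructed $\Delta$ is exactly the original one) which the paper leaves implicit, and your sketch of that verification is sound.
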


\begin{proof}
 The statement follows from Theorem \ref{thm:characterization-two-factor-injective}, Lemma \ref{la:trivial-mi-upper-bound-construction} and \ref{la:trivial-mi-upper-bound}.
\end{proof}

By combining Theorem \ref{thm:correspondence-degenerate-two-factor-injective} with Lemma \ref{lem:3-factor-correspondence-to-2-factor-injective} we obtain the following correspondence result for degenerate subdirect products.

\begin{corollary}
 \label{cor:correspondence-degenerate}
 There is a natural one-to-one correspondence between degenerate subdirect products of $G_1 \times G_2 \times G_3$
 and the tuples $(N_1,N_2,N_3,B_1,B_2,B_3,C_1,C_2,C_3,\varphi_1,\varphi_2,\varphi_3)$ for which for all~$i\in \{1,2,3\}$ (indices taken modulo 3) we have 
 \begin{enumerate}
  \item $N_i \trianglelefteq G_i$,
  \item $B_i,C_i \trianglelefteq G_i/N_i$,
  \item $B_i \cap C_i = 1$,
  \item $[B_i, C_i]= 1$,
  \item $(G_i/N_i)/C_i \stackrel{\varphi_i}{\cong} (G_{i+1}/N_{i+1})/B_{i+1}$,
  \item $\varphi_i(B_iC_i) = C_{i+1}B_{i+1}$,
  \item $\varphi_2(\varphi_1(g_1B_1C_1)) = \varphi_3^{-1}(g_1B_1C_1)$ for all $g_1 \in G_1/N_1$.
 \end{enumerate}
\end{corollary}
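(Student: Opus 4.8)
The plan is to obtain the stated correspondence by \emph{composing} the two bijections already available: the one of Lemma~\ref{lem:3-factor-correspondence-to-2-factor-injective} between subdirect products $\Delta\leq G_1\times G_2\times G_3$ and tuples $(N_1,N_2,N_3,\Delta')$ with $N_i\trianglelefteq G_i$ and $\Delta'$ a 2-factor injective subdirect product of $G_1/N_1\times G_2/N_2\times G_3/N_3$, and the one of Theorem~\ref{thm:correspondence-degenerate-two-factor-injective} between degenerate 2-factor injective subdirect products and tuples $(B_1,\dots,\varphi_3)$ satisfying its six conditions, now applied over the quotient groups $G_i/N_i$. Chaining these, a degenerate subdirect product $\Delta$ first produces $(N_1,N_2,N_3,\Delta')$ and then, feeding $\Delta'$ into Theorem~\ref{thm:correspondence-degenerate-two-factor-injective}, the full tuple $(N_1,N_2,N_3,B_1,\dots,\varphi_3)$; conversely such a tuple yields $\Delta'$ via Theorem~\ref{thm:correspondence-degenerate-two-factor-injective} and then $\Delta$ via Lemma~\ref{lem:3-factor-correspondence-to-2-factor-injective}. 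Since a composite of bijections is a bijection, the only substantive thing to check is that the correspondence of Lemma~\ref{lem:3-factor-correspondence-to-2-factor-injective} restricts correctly, i.e.\ that $\Delta$ is degenerate if and only if its 2-factor injective part $\Delta'$ is degenerate.

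This compatibility of degeneracy is the step I expect to require the most care. Writing $H_j=\ker(\pi_j)\cap\Delta$ together with $\widetilde{B}_i=\pi_i(H_{i+2})$, $\widetilde{C}_i=\pi_i(H_{i+1})$ for $\Delta$, and the analogous $B_i,C_i,M_i$ for $\Delta'$ over $G_i/N_i$, I would first observe that every element of $\ker(\psi_i)\cap\Delta$ is trivial in coordinates $i+1$ and $i+2$, hence lies in $H_{i+1}\cap H_{i+2}$; applying $\pi_i$ gives $N_i\leq\widetilde{B}_i\cap\widetilde{C}_i$, so the quotients $\widetilde{B}_i/N_i$ and $\widetilde{C}_i/N_i$ make sense. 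Next I would show that the quotient map $q\colon\Delta\to\Delta'$ carries $H_{i+2}$ \emph{onto} $H'_{i+2}$: given a coset in $H'_{i+2}$, its representative in $\Delta$ has $(i+2)$-th coordinate in $N_{i+2}=\pi_{i+2}(\ker(\psi_{i+2})\cap\Delta)$, and correcting by a suitable element of $\ker(\psi_{i+2})\cap\Delta$ (which lies in $N_1\times N_2\times N_3$, hence dies under $q$) yields a representative with trivial $(i+2)$-th coordinate. Consequently $B_i=\widetilde{B}_i/N_i$ and $C_i=\widetilde{C}_i/N_i$, so $M_i=(\widetilde{B}_i\cap\widetilde{C}_i)/N_i$, and thus $M_i=1$ precisely when $\widetilde{B}_i\cap\widetilde{C}_i=N_i$. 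The latter is exactly the defining condition of $\Delta$ being degenerate, so degeneracy is preserved in both directions.

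Granting this, the composite is a bijection between degenerate subdirect products and tuples $(N_1,N_2,N_3,B_1,\dots,\varphi_3)$ in which $N_i\trianglelefteq G_i$ (Condition~1) and $(B_1,\dots,\varphi_3)$ satisfies the hypotheses of Theorem~\ref{thm:correspondence-degenerate-two-factor-injective} over the groups $G_i/N_i$; these hypotheses are verbatim Conditions~2 through~7 of the corollary. The only discrepancy is cosmetic: Theorem~\ref{thm:correspondence-degenerate-two-factor-injective} writes the last condition as $\varphi_3(\varphi_2(\varphi_1(g_1B_1C_1)))=g_1B_1C_1$, which, since each $\varphi_i$ is an isomorphism, is equivalent to $\varphi_2(\varphi_1(g_1B_1C_1))=\varphi_3^{-1}(g_1B_1C_1)$ as stated in Condition~7. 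Finally, naturality of the composite is inherited from the naturality of each of the two constituent correspondences, so no separate argument is needed there.
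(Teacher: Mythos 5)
Your proposal is correct and takes essentially the same route as the paper, which obtains the corollary precisely by combining Lemma~\ref{lem:3-factor-correspondence-to-2-factor-injective} with Theorem~\ref{thm:correspondence-degenerate-two-factor-injective}. Your explicit check that degeneracy transfers between $\Delta$ and $\Delta'$ (via $N_i\leq\widetilde{B}_i\cap\widetilde{C}_i$, the surjectivity of $q$ on the $H_{i+2}$, and $(\widetilde{B}_i\cap\widetilde{C}_i)/N_i=B_i\cap C_i$) is a correct and welcome elaboration of a step the paper leaves implicit.
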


We conclude with the particular case in which~$\pi_i(\Delta)$ has a complement in~$G_i$ for some~$i\in \{1,2,3\}$. Example~\ref{ex:diagonal} described in the previous section is of this form.
For this case we only obtain an injection to tuples, rather than a one-to-one correspondence. We will exploit having this injection in the next section for small special cases in our analysis of subdirect products of symmetric groups.

\begin{theorem}
 \label{thm:injection:semi:direct}
 Suppose~$G_1 = E_1 \rtimes K$ is a semidirect product. There is an injective mapping  from the set of 2-factor injective subdirect products~$\Delta$ of $G_1 \times G_2 \times G_3$ with~$\pi_1(H) = E_1$ and~$B_1 = C_1$
 to the tuples $(\kappa,\iota)$ where~$G_2 \stackrel{\kappa}{\cong} G_{3}$ and~$\iota$ is an automorphism of~$G_1$ that fixes~$K$ as a set.
 Moreover if~$(\kappa,\iota)$ is in the image of this mapping then~$(\kappa,\iota')$ is also in the image for every automorphism~$\iota'$ of~$G_1$ that fixes~$K$ as a set. 
\end{theorem}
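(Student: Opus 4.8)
The plan is to first use the two hypotheses to pin down the abelian part. From $B_1C_1=\pi_1(H)=E_1$ (as in Lemma~\ref{lem:hi:and:hj:commute}) together with $B_1=C_1$ one gets $B_1=C_1=E_1$, so $E_1$ is abelian and coincides with the central subgroup $M$ of Theorem~\ref{thm:characterization-two-factor-injective}, while $G_1=E_1\rtimes K$ with $E_1\cap K=1$. Using $\varphi^3_{1,2}(M_1)=M_2$ with $M_1=E_1=\pi_1(H_3)$ (Lemma~\ref{lem:canonical:iso:between:groups}) I would record $C_2\le B_2$ and, symmetrically, $B_3\le C_3$, so that $\pi_2(H)=B_2$ and $\pi_3(H)=C_3$. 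This is exactly the regime of Example~\ref{ex:diagonal}, and the goal is to show that every such $\Delta$ is a twist of that prototype.

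Next I would isolate the ``$K$-part''. Put $\Delta_K:=\Delta\cap(K\times G_2\times G_3)$. An element of $\Delta_K\cap H_2$ has first coordinate in $K\cap\pi_1(H_2)=K\cap E_1=1$, so it lies in $H_1\cap H_2=1$ by $2$-factor injectivity; hence $\Delta_K\cap H_2=\Delta_K\cap H_3=1$. Since $H_2\trianglelefteq\Delta$ and the index identity $[\pi_i(\Delta)\colon\pi_i(H)]=[\Delta\colon H]$ of Theorem~\ref{thm:characterization-two-factor-injective} yields the order count $|\Delta_K|\cdot|H_2|=|\Delta|$, this forces $\Delta=H_2\rtimes\Delta_K$. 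The same count shows that $\pi_2|_{\Delta_K}$ and $\pi_3|_{\Delta_K}$ are injective with image of full order, hence isomorphisms onto $G_2$ and $G_3$; composing them defines the canonical isomorphism $\kappa:=(\pi_3|_{\Delta_K})\circ(\pi_2|_{\Delta_K})^{-1}\colon G_2\to G_3$, i.e.\ $\kappa(g_2)=g_3$ whenever $(k,g_2,g_3)\in\Delta$ with $k\in K$.

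Writing the unique element of $\Delta_K$ over $g_2$ as $(\alpha(g_2),g_2,\kappa(g_2))$ with $\alpha\colon G_2\to K$ a surjection of kernel $B_2$, and the elements of $H_2$ as $(\varphi_3(b)^{-1},1,b)$ with $\varphi_3\colon B_3\to E_1$ the isomorphism of Lemma~\ref{lem:canonical:iso:between:groups}, the decomposition $\Delta=H_2\rtimes\Delta_K$ gives the normal form $\Delta=\{(\varphi_3(b)^{-1}\alpha(g_2),\,g_2,\,b\,\kappa(g_2))\mid g_2\in G_2,\ b\in B_3\}$. Thus $\Delta$ is determined by $\kappa$ together with the first-coordinate gluing data $(\alpha,\varphi_3)$. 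The plan is to package this gluing, using the fixed splitting $G_1=E_1\rtimes K$, into a single automorphism $\iota$ of $G_1$ fixing $K$ setwise, measuring how the copies of $E_1$ and $K$ produced inside $G_1$ by $H_2$ and by $\Delta_K$ deviate from the given complement. Injectivity would then follow from the normal form, since $(\kappa,\iota)$ recovers $\alpha$, $\varphi_3$, $B_3=\pi_3(H_2)$, and hence $\Delta$.

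For the ``moreover'' I would, for a fixed achievable $\kappa$ and an arbitrary automorphism $\iota'$ of $G_1$ fixing $K$, produce a new member of the set realizing $(\kappa,\iota')$ by re-gluing: keep $\kappa$ and the $\Delta_K$-type data and modify the first-coordinate identification according to $\iota'$, then verify the result is again a $2$-factor injective subdirect product with $\pi_1(H)=E_1$ and $B_1=C_1$; equivalently, one exhibits a free action of the group of automorphisms of $G_1$ fixing $K$ on the set of such $\Delta$, with $\kappa$ as orbit invariant and $\iota$ as the coordinate in the fibre. I expect this last step to be the main obstacle, for a subtle reason: one \emph{cannot} realize the action by applying $\iota'$ to the first coordinate, since $\iota'$ generally sends $E_1$ to $\iota'(E_1)\neq E_1$ and thereby destroys the requirement $\pi_1(H)=E_1$. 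The re-gluing must instead be carried out internally, and proving that it keeps $\Delta$ in the prescribed set, is free, and preserves $\kappa$ is precisely where the real work lies.
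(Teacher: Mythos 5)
Your preparatory analysis is correct and in places more careful than the paper's: the identities $B_1=C_1=E_1=M_1$ (so $E_1$ is abelian), $C_2\leq B_2$, $B_3\leq C_3$, the splitting $\Delta=H_2\rtimes\Delta_K$ with $\Delta_K=\Delta\cap(K\times G_2\times G_3)$, and the isomorphism $\kappa$ all match the paper, which obtains $\kappa$ from the unique element $(k,g_2,g_3)\in\Delta$ with $k\in K$. The genuine gap is that the theorem's key object, $\iota$, is never constructed: you announce a ``plan'' to package $(\alpha,\varphi_3)$ into a single automorphism of $G_1$ and assert that injectivity ``would then follow'', but such an absolute packaging does not exist. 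Your own normal form shows why: the gluing data contains the subgroup $B_2=\ker\alpha\leq G_2$ (equivalently $B_3\leq G_3$), which can vary among subdirect products sharing the same $\kappa$, and an automorphism of $G_1$ has no slot in which to record a subgroup of $G_2$. (Concretely, for $G_1=G_2=G_3=\mathbb{Z}_2\times\mathbb{Z}_2$ with $E_1=\langle a\rangle$, $K=\langle b\rangle$, a fixed $\kappa$ admits three choices of $\ker\alpha$, while only two automorphisms of $G_1$ fix $K$ setwise.) The paper's device --- the idea missing from your sketch --- is to define $\iota$ only \emph{relative to a second subdirect product} $\Delta'$ realizing the same $\kappa$: one sets $\iota_K(k)=k'$ where $(k,g_2,\kappa(g_2))\in\Delta$ and $(k',g_2,\kappa(g_2))\in\Delta'$, sets $\iota_E(e)=e'$ via $(e,g_2,1)\in\Delta$ and $(e',g_2,1)\in\Delta'$, and verifies that $ek\mapsto\iota_E(e)\iota_K(k)$ is an automorphism using $2$-factor injectivity. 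This base-point comparison (whose well-definedness itself needs $\ker\alpha=\ker\alpha'$, i.e.\ it works when $B_2$ is pinned down, as it is in the $S_4$ and $S_3$ applications) is what makes injectivity provable, and it is exactly why the paper remarks afterwards that $\iota$, unlike $\kappa$, is not canonical.

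For the ``moreover'' part your proof is likewise not carried out, and your stated reason for rejecting the obvious construction is at odds with the paper: the paper does realize the action by first-coordinate twisting, $\Delta'=\{(\iota'(g_1),g_2,g_3)\mid(g_1,g_2,g_3)\in\Delta\}$, and checks that reapplying the comparison construction returns $\iota'$. Your objection --- that $\iota'(E_1)\neq E_1$ would destroy $\pi_1(H)=E_1$ --- is a sharp observation; the twist does silently require $\iota'(E_1)=E_1$, which holds in both of the paper's applications because there every automorphism of $G_1$ fixes $E_1$ setwise ($E_1$ being $V\trianglelefteq S_4$, resp.\ $A_3\trianglelefteq S_3$). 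But having rejected the twist, you offer no substitute: the promised ``internal re-gluing'' and the verification that it is free, stays in the prescribed set, and preserves $\kappa$ are precisely the content of the theorem and are left undone. In sum, the solid part of your write-up is the normal form $\Delta=\{(\varphi_3(b)^{-1}\alpha(g_2),\,g_2,\,b\,\kappa(g_2))\}$, which the paper leaves implicit and which is genuinely useful; the two steps that constitute the actual proof --- the construction of $\iota$ and the realization of arbitrary $\iota'$ --- are both absent from your proposal.
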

\begin{proof}

Let~$\Delta$ be a 2-factor injective subdirect product of $G_1 \times G_2 \times G_3$ satisfying~$\pi_1(H) = E_1$ and~$B_1 = C_1$. 

For every element~$g_2 \in G_2$ 
 there is exactly one element~$(k_1,g_2,g_3)\in \Delta$ with~$k_1\in K$.
We obtain a well defined isomorphism~$\kappa$ from~$G_2$ to~$G_{3}$.

Suppose now that~$\Delta'$ is a second 2-factor injective subdirect product $G_1 \times G_2 \times G_3$ satisfying~$\pi_1(H) = E_1$ and~$B_1 = C_1$ for which we obtain the same isomorphism~$\kappa$.
Then we can construct an automorphism~$\iota$ of~$G_1$ as follows. For every~$g_2\in G_2$ there is exactly one~$(k,g_2,\kappa(g_2))\in \Delta$. There is also an element of the form~$(k',g_2,\kappa(g_2)) \in \Delta'$. We define~$\iota_K\colon K\rightarrow K$ so that it maps~$k$ to~$k'$, this gives us an automorphism~$\iota_K$ of~$K$. For every~$e\in E_1$ there is an element~$(e,g_2,1)\in \Delta$. There is also an element~$(e',g_2,1)\in \Delta'$ and define the map~$\iota_E\colon E_1\rightarrow E_1$ by mapping~$e$ to~$e'$. Then the map~$\iota_E$ is an automorphism of~$E$.

We claim that the map that sends~$e\cdot k$ to~$\iota_E(e)\cdot \iota_K(k)$ is an automorphism of~$G_1$.
 To see this suppose~$a = (e_1,h,1)(k,g,\kappa(g))$ and~$\overline{a} = (\overline{e_1},\overline{h},1)(\overline{k},\overline{g},\kappa(\overline{g}))$ are two elements in~$\Delta$.
Then~$\iota(a) = (\iota_E(e_1),h,1)(\iota_K(k),g,\kappa(g))$ and~$\iota(\overline{a}) = (\iota_E(\overline{e_1}),\overline{h},1)(\iota_K(\overline{k}),\overline{g},\kappa(\overline{g}))$ are elements of~$\Delta'$.

For the products we obtain that\[a \overline{a} = (e_1\overline{e_1}^{k},h \overline{h}^{g},1)(k\overline{k},g\overline{g},\kappa(g)\kappa(\overline{g}))\] and \[\iota(a) \iota(\overline{a}) = (\iota_E(e_1)\iota_E(\overline{e_1})^{\iota_K(k)},h \overline{h}^{g},1)(\iota_K(k)\overline{\iota_K(k)},g\overline{g},\kappa(g)\kappa(\overline{g})).\]
To conclude that~$\iota$ is an isomorphism we now only need to argue that~$\iota_E(\overline{e_1})^{\iota_K(k)}$ is equal to~$\iota_E(\overline{e_1}^k)$. However, this is the case since~$(\overline{e_1}^k, h^g,1) \in \Delta$ and~$(\iota_E(e_1)^{\iota_K(k)}, h^g,1)\in \Delta'$.

Now suppose that~$\Delta$ is a subdirect product with~$\pi_1(H) = B_1 = C_1$ and let~$\kappa\colon G_2\rightarrow G_3$ be defined as above.
Let~$\iota$  be an automorphism of~$G_1$ that fixes~$K$ then~$\Delta' = \{(\iota(g_1),g_2,g_3) \mid (g_1,g_2,g_3)\in \Delta \}$ is a subdirect product of~$G_1\times G_2\times G_3$. If we apply the above construction for the automorphism of~$G_1$ we reobtain~$\iota$. This shows that the construction of~$\iota$ from~$\Delta'$ and the construction of~$\Delta'$ from~$\iota$ are inverses to one another.
\end{proof}

Note that in the theorem, the isomorphism~$\kappa$ associated with a subdirect product is canonical (it only depends on the choice of~$K$) but the choice of~$\iota$ is not.

\section{Subdirect products of two or three symmetric groups}

In this section we apply the correspondence theorems to count the subdirect products of the direct product of three symmetric groups.
We first reduce this problem to counting the number of 2-factor injective subdirect products.
For finite groups $G_1,\dots,G_k$ let $\ell(G_1,\dots,G_k)$ be the number of subdirect products of $G_1 \times \dots \times G_k$.
Furthermore, for~$k=3$, we denote by $\ell_{2\text{-inj}}(G_1,\dots,G_3)$ the number of 2-factor injective subdirect products.

\begin{lemma}
 \label{la:counting-subdirect-with-2-factor-injective}
 Let $G_1,G_2,G_3$ be finite non-trivial groups. Then
 \begin{align*}
  \ell(G_1,G_2,G_3) &= \sum_{N_i \unlhd G_i} \ell_{2\text{-inj}}(G_1/N_1,G_2/N_2,G_3/N_3)\\
                    &= \ell(G_{1},G_{2})+\ell(G_{2},G_{3})+\ell(G_{1},G_{3})-2 + \sum_{N_i \lhd G_i} \ell_{2\text{-inj}}(G_1/N_1,G_2/N_2,G_3/N_3).
 \end{align*}
\end{lemma}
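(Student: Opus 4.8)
The plan is to read off the first equality from the bijection of Lemma~\ref{lem:3-factor-correspondence-to-2-factor-injective}, and then to obtain the second by sorting the resulting triple sum according to which of the quotients $G_i/N_i$ degenerate to the trivial group.

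First I would apply Lemma~\ref{lem:3-factor-correspondence-to-2-factor-injective}: every subdirect product of $G_1\times G_2\times G_3$ corresponds bijectively to a tuple $(N_1,N_2,N_3,\Delta')$ with $N_i\unlhd G_i$ and $\Delta'$ a $2$-factor injective subdirect product of $G_1/N_1\times G_2/N_2\times G_3/N_3$. Grouping these tuples by $(N_1,N_2,N_3)$ and counting the admissible $\Delta'$ for each fixed triple immediately gives
\[\ell(G_1,G_2,G_3)=\sum_{N_1\unlhd G_1}\sum_{N_2\unlhd G_2}\sum_{N_3\unlhd G_3}\ell_{2\text{-inj}}(G_1/N_1,G_2/N_2,G_3/N_3),\]
which is the first equality.

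For the second equality I would write $T_S$ for the part of this sum coming from those triples with $\{i\mid N_i=G_i\}=S$, so that $T_\emptyset$ is precisely the sum over proper $N_i\lhd G_i$ appearing on the right-hand side and $\ell(G_1,G_2,G_3)=\sum_{S\subseteq\{1,2,3\}}T_S$. The crucial input is the evaluation of $\ell_{2\text{-inj}}$ on triples with a trivial factor. I claim $\ell_{2\text{-inj}}(\{1\},A,B)$ equals the number of isomorphisms $A\to B$: forgetting the constantly trivial first coordinate identifies such a $\Delta'$ with a subgroup of $A\times B$, and being subdirect and $2$-factor injective forces both projections onto $A$ and onto $B$ to be bijective, so by Goursat's Lemma these $\Delta'$ are exactly the \emph{diagonal} subdirect products of $A\times B$, one for each isomorphism $A\cong B$. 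When two factors are trivial, the projection forgetting the one possibly-nontrivial coordinate is injective only if that factor is trivial as well, whence $\ell_{2\text{-inj}}(\{1\},\{1\},C)=1$ if $C=\{1\}$ and $0$ otherwise; in particular $\ell_{2\text{-inj}}(\{1\},\{1\},\{1\})=1$.

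It then remains to assemble the blocks $T_S$. By the two-factor Goursat correspondence of Theorem~\ref{thm:goursat:correspondence} applied to subdirect products we have $\ell(G_2,G_3)=\sum_{N_2\unlhd G_2,\,N_3\unlhd G_3}|\mathrm{Iso}(G_2/N_2,G_3/N_3)|$, and by the claim this equals $\sum_{N_2\unlhd G_2,\,N_3\unlhd G_3}\ell_{2\text{-inj}}(\{1\},G_2/N_2,G_3/N_3)$. Restricting to proper $N_2,N_3$ only discards the terms with $N_2=G_2$ or $N_3=G_3$, which all vanish except for the all-trivial one equal to $1$, so $T_{\{1\}}=\ell(G_2,G_3)-1$, and symmetrically $T_{\{2\}}=\ell(G_1,G_3)-1$ and $T_{\{3\}}=\ell(G_1,G_2)-1$. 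Each block with $|S|=2$ contributes $0$ because its surviving quotient is nontrivial, while $T_{\{1,2,3\}}=1$. Hence $\ell(G_1,G_2,G_3)=T_\emptyset+(T_{\{1\}}+T_{\{2\}}+T_{\{3\}})+T_{\{1,2,3\}}$ produces the three two-factor terms together with the constant $-1-1-1+1=-2$, which is exactly the asserted identity. I expect the main obstacle to be not any single step but the careful bookkeeping of these degenerate trivial-quotient cases, since it is precisely their contribution that creates the pairwise terms $\ell(G_i,G_j)$ and the offset $-2$; finiteness of the $G_i$ enters only to keep all the counts finite.
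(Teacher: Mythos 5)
Your proof is correct and takes essentially the same approach as the paper: the first equality is read off from Lemma~\ref{lem:3-factor-correspondence-to-2-factor-injective}, and your partition into the blocks $T_S$ is a careful expansion of the paper's one-line observation that the degenerate terms reassemble into the three pairwise counts with the full direct product counted three times, forcing the offset $-2$. The only detail you supply that the paper leaves implicit is the (correct) evaluation $\ell_{2\text{-inj}}(\{1\},A,B)=|\mathrm{Iso}(A,B)|$ and its matching, via the Goursat correspondence of Theorem~\ref{thm:goursat:correspondence}, with $\ell(G_i,G_j)$.
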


\begin{proof}
 The first equality follows from the correspondence described in Lemma \ref{lem:3-factor-correspondence-to-2-factor-injective}. The second equality follows by noting that the direct product is counted three times, so 2 has to be subtracted.
\end{proof}

We are now interested in the number $\ell(n_1,n_2,n_3) := \ell(S_{n_1},S_{n_2},S_{n_3})$, where~$S_{n_i}$ is  the symmetric group of a set with~$n_i$ elements. 
Recall, that every factor group of a symmetric group is isomorphic to a symmetric group over another set. Thus, 
by the previous lemma, it suffices to compute the numbers~$\ell(n_1,n_2)$ and ~$\ell_{2\text{-inj}}(n_1,n_2,n_3) := \ell_{2\text{-inj}}(S_{n_1},S_{n_2},S_{n_3})$.

We start by analyzing the situation for two factors.

\begin{lemma}
\label{la:counting-2-factor-sn}
Let $n_1,n_2 \geq 2$. For the number~$\ell(n_1,n_2)$ of subdirect products of~$S_{n_1}\times S_{n_2}$ we have~\[\ell(n_1,n_2) = \begin{cases}
2 & \text{ if } n_1\neq n_2 \text{ and } \{n_1, n_2\}\neq \{3,4\}, \\
8 & \text{ if } \{n_1, n_2\} = \{3,4\}, \\
n_1! + 2&  \text{ if } n_1= n_2\notin \{2,4,6\},\\
2&  \text{ if } n_1= n_2= 2,\\
n_1! + 8&  \text{ if } n_1= n_2= 4,\\
2n_1! + 2&  \text{ if } n_1= n_2= 6.\\
\end{cases}\]
\end{lemma}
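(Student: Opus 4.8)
The plan is to apply the Goursat correspondence of Theorem~\ref{thm:goursat:correspondence} directly. Since we count \emph{subdirect} products of $S_{n_1}\times S_{n_2}$, every such product forces $P_1=S_{n_1}$ and $P_2=S_{n_2}$, so the relevant data reduces to a triple $(N_1,N_2,\varphi)$ with $N_i\trianglelefteq S_{n_i}$ and an isomorphism $\varphi\colon S_{n_1}/N_1\to S_{n_2}/N_2$. Grouping these triples by the isomorphism type $Q$ of the common quotient, and using that for two isomorphic copies of $Q$ the set of isomorphisms is a torsor under $\Aut(Q)$ and hence has size $|\Aut(Q)|$, I obtain
\[
  \ell(n_1,n_2)=\sum_{Q} a_{n_1}(Q)\,a_{n_2}(Q)\,|\Aut(Q)|,
\]
where $a_n(Q)$ denotes the number of normal subgroups $N\trianglelefteq S_n$ with $S_n/N\cong Q$. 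Thus the statement becomes a finite bookkeeping problem once the normal subgroup lattice of $S_n$ and the relevant automorphism groups are recorded.

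First I would record the normal subgroups of $S_n$: for $n=2$ they are $\{1\}$ and $S_2$; for $n=3$ they are $\{1\},A_3,S_3$; for $n=4$ they are $\{1\}$, the Klein four-group $V_4$, $A_4$, and $S_4$; and for $n\geq 5$ they are $\{1\},A_n,S_n$. In every case the distinct normal subgroups yield pairwise non-isomorphic quotients (the quotients are $S_n$, $\mathbb{Z}_2$ and $\{1\}$, together with the additional quotient $S_4/V_4\cong S_3$ when $n=4$), so each $a_n(Q)\in\{0,1\}$ and the sum above reduces to summing $|\Aut(Q)|$ over the isomorphism types $Q$ that arise as a quotient of \emph{both} $S_{n_1}$ and $S_{n_2}$.

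Next I would record the automorphism groups of the possible quotients: $|\Aut(\{1\})|=|\Aut(\mathbb{Z}_2)|=1$, $|\Aut(S_3)|=6$, and, crucially, $\Aut(S_n)\cong S_n$ has order $n!$ for $n\geq 3$ with the single exception $n=6$, where the outer automorphism gives $|\Aut(S_6)|=2\cdot 6!$. With these numbers in hand each of the six stated cases is a short computation. For distinct $n_1\neq n_2$ the only common quotients are $\{1\}$ and $\mathbb{Z}_2$, giving $1+1=2$, unless $\{n_1,n_2\}=\{3,4\}$, where additionally $S_4/V_4\cong S_3$ agrees with $S_3/\{1\}$ and contributes $|\Aut(S_3)|=6$, giving $8$. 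For $n_1=n_2=n$ one also gets the full quotient $S_n$, which yields $n!+2$ generically, the value $2$ when $n=2$ (here $A_2=\{1\}$ and $S_2\cong\mathbb{Z}_2$, so only the two types $\{1\}$ and $\mathbb{Z}_2$ occur), the value $n!+8$ when $n=4$ (the extra $V_4$-quotient of type $S_3$ contributing $6$), and $2\cdot n!+2$ when $n=6$.

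The only genuinely delicate points, and thus the main things to get right, are the two exceptional phenomena that break the naive pattern: the outer automorphism of $S_6$, responsible for the factor $2$ in the case $n_1=n_2=6$, and the extra normal subgroup $V_4\trianglelefteq S_4$ whose quotient is isomorphic to $S_3$, which simultaneously explains the anomalous value $8$ in the case $\{n_1,n_2\}=\{3,4\}$ and the extra summand in the case $n_1=n_2=4$. The normal subgroup lattice of $S_4$ and the fact that $S_n$ has no outer automorphisms for $n\neq 6$ are standard facts I would cite rather than reprove, so the entire argument is essentially a tabulation of $|\Aut(Q)|$ over the common quotient types.
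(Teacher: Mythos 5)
Your proposal is correct and takes essentially the same route as the paper: a direct application of Theorem~\ref{thm:goursat:correspondence} with $P_i=S_{n_i}$, combined with the known normal subgroup lattices of $S_n$ (including the exceptional $V_4\trianglelefteq S_4$ with $S_4/V_4\cong S_3$) and the automorphism counts $|\Aut(S_n)|=n!$ for $n\geq 3$, $n\neq 6$, versus $2\cdot 6!$ for $n=6$ and the trivial cases. Your only departure is organizational --- packaging the case analysis as the sum $\sum_Q a_{n_1}(Q)\,a_{n_2}(Q)\,|\Aut(Q)|$ over quotient types rather than enumerating choices of $N_1$ as the paper does --- and all six resulting values match the stated formula.
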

\begin{proof}
We assume for our considerations that~$n_1\geq n_2$.
Let~$(S_{n_1},S_{n_2},N_1,N_2,\varphi)$ be a tuple corresponding to a subdirect product via the correspondence of Theorem~\ref{thm:goursat:correspondence}.
\begin{itemize}

\item If~$N_1 = 1$ then~$N_2 = 1$ and~$n_1= n_2$. The number of isomorphisms from~$S_{n_1}$ to~$S_{n_1}$ is~\[i(n_1) = \begin{cases}
                                                                                                                       1 &\text{ if } n_1= 2\\
                                                                                                                       2n! &\text{ if } n_1= 6\\
                                                                                                                       n! &\text{ otherwise}\\
                                                                                                                      \end{cases}.\]
This corresponds to the number of possible choices for~$\varphi$. (These are the diagonal subgroups.)

\item If~$N_1 = S_{n_1}$ then~$N_2 = S_{n_2}$. There is only one subgroup of this type. (This is the direct product).

\item If~$N_1 = A_{n_1}$ ($n_1 \geq 3$) then~$N_2 = A_{n_2}$, since the only index~$2$ subgroup that a symmetric group can have is the alternating group. There is only one subgroup of this type.

\end{itemize}

If~$n_1\neq  4$ then~$N_1 \in \{1,A_{n_1},S_{n_1}\}$, and we already considered all these cases.
Suppose now that~$n_1 =4$ and~$n_2\leq 4$. Then~$N_1\in \{1,V,A_{n_1},S_{n_1}\}$, where~$V$ is the Klein-four-group. Three of cases are considered above.
\begin{itemize}
\item If $N_1 = V$ then~$N_2 = V$ and~$n_2 = 4$ or~$N_2 = 1$ and~$n_2 = 3$. In either case there are~$6$ options for~$\varphi$.
\end{itemize}
\end{proof}

In the following we use~``$\leftmset$'' and~``$\rightmset$'' to denote multisets.

\begin{lemma}
\label{la:counting-3-factor-sn-2-factor-injective}
 Let $n_1,n_2,n_3 \geq 2$. For the number~$\ell_{2\text{-inj}}(n_1,n_2,n_3)$ of~$2$-factor injective subdirect products of~$S_{n_1}\times S_{n_2}\times S_{n_3}$ we have~\[\ell_{2\text{-inj}}(n_1,n_2,n_3) = \begin{cases}

(n_1!)^{2} &  \text{ if } n_1 = n_2 = n_3 \notin \{2,3,4,6\},\\
2 &  \text{ if } n_1 = n_2 = n_3 = 2,\\
(n_1!)^{2} + 2n_1! &  \text{ if } n_1 = n_2 = n_3 = 3,\\
(n_1!)^{2} + 6n_1! &  \text{ if } n_1 = n_2 = n_3 = 4,\\
(2n_1!)^{2} &  \text{ if } n_1 = n_2 = n_3 = 6,\\
1440 &  \text{ if } \leftmset n_1,n_2,n_3\rightmset = \leftmset 2,6,6\rightmset\\
n! &  \text{ if } \leftmset n_1,n_2,n_3\rightmset = \leftmset 2,n,n\rightmset \text{ for } n \notin \{2,6\},\\
144 &  \text{ if } \leftmset n_1,n_2,n_3\rightmset = \leftmset 3,4,4\rightmset\\
0&  \text{ otherwise.}\\

\end{cases}\]
\end{lemma}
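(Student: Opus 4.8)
The plan is to enumerate the 2-factor injective subdirect products $\Delta\le S_{n_1}\times S_{n_2}\times S_{n_3}$ through the invariants $B_i=\pi_i(H_{i+2})$, $C_i=\pi_i(H_{i+1})$ and the canonical isomorphisms $\varphi_i$ attached to $\Delta$ in Lemma~\ref{lem:canonical:iso:between:groups} and the correspondence theorems. First I would split according to whether the common Abelian group $M$ (the identified groups $M_i=B_i\cap C_i$) is trivial or not. The degenerate case $M=\{1\}$ is governed by the one-to-one correspondence of Theorem~\ref{thm:correspondence-degenerate-two-factor-injective}, so counting those $\Delta$ reduces to counting admissible tuples $(B_1,\dots,C_3,\varphi_1,\varphi_2,\varphi_3)$; the non-degenerate case $M\neq\{1\}$ I would treat separately via Theorem~\ref{thm:injection:semi:direct}.

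The structural heart is to pin down which pairs $(B_i,C_i)$ can occur. I would use that the normal subgroups of $S_m$ are exactly $\{1\},A_m,S_m$ (plus the Klein four-group $V_4$ when $m=4$), together with $C_{S_m}(A_m)=C_{S_m}(S_m)=\{1\}$ for $m\ge 4$ and $C_{S_3}(A_3)=A_3$. Since $[B_i,C_i]=1$ by Lemma~\ref{lem:hi:and:hj:commute}, at most one of $B_i,C_i$ can be nontrivial unless $B_i=C_i=M$ for the unique Abelian normal subgroup $M\in\{S_2,A_3,V_4\}$. Consequently $M_i\neq\{1\}$ forces $n_i\in\{2,3,4\}$ with $M_i\cong\mathbb Z_2,\mathbb Z_3,\mathbb Z_2^2$ respectively; as the $M_i$ are pairwise isomorphic, the non-degenerate case arises only when $n_1=n_2=n_3=n\in\{2,3,4\}$.

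For the degenerate count I would observe that the properties of Theorem~\ref{thm:correspondence-degenerate-two-factor-injective} force all $G_i/(B_iC_i)$ to be isomorphic to one common quotient $Q$, while $G_i/C_i\cong G_{i+1}/B_{i+1}$ links $C_i$ with $B_{i+1}$ around the $3$-cycle. A parity argument (a $2$-colouring of an odd cycle cannot close up) then rules out every configuration in which all three factors are nontrivial, and the quotient-matching constraints pin down the multiset in the remaining cases. The survivors are: (a) $B_i=C_i=\{1\}$ for all $i$, the diagonal subgroups, which exist only when $n_1=n_2=n_3=n$ and number $i(n)^2$ with $i(n)=|\Aut(S_n)|$; and (b) exactly one trivial factor together with two nontrivial factors sharing a proper common quotient, which can only be $Q=S_n/A_n\cong \mathbb Z_2$ (giving the multiset $\leftmset 2,n,n\rightmset$) or $Q=S_4/V_4\cong S_3$ (giving $\leftmset 3,4,4\rightmset$). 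In each surviving pattern I would count tuples by the free isomorphisms: the composition constraint removes precisely the degree of freedom living on $Q$, while isomorphisms between the $\mathbb Z_2$-quotients are unique. This yields $i(n)$ for $\leftmset 2,n,n\rightmset$ and $i(3)\,i(4)=144$ for $\leftmset 3,4,4\rightmset$, using $i(n)=n!$ except $i(2)=1$ and $i(6)=2\cdot 6!$.

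Finally, for the non-degenerate case $n_1=n_2=n_3=n\in\{2,3,4\}$ I would apply Theorem~\ref{thm:injection:semi:direct} with $E_1=\pi_1(H)=M$ and a complement $K$ (trivial for $n=2$, an order-$2$ subgroup for $n=3$, a point stabiliser $\cong S_3$ for $n=4$). The count becomes $|\{\text{realized }\kappa\}|\cdot|\{\iota\in\Aut(S_n):\iota(K)=K\}|$; I would show every isomorphism $\kappa\colon S_n\to S_n$ is realized by an explicit diagonal-type construction as in Example~\ref{ex:diagonal}, and that $K$ is self-normalizing (with $\Aut(S_n)$ inner for $n\le 4$), so the $\iota$-fibre has size $[S_n:M]$. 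Hence the non-degenerate count is $i(n)\cdot[S_n:M]$, namely $1,12,144$ for $n=2,3,4$. Adding degenerate and non-degenerate contributions per multiset reproduces the table, with every remaining multiset contributing $0$ because the classification of admissible $(B_i,C_i,\varphi_i)$ leaves no consistent data. I expect the main obstacle to be this non-degenerate count, in particular the case $n=4$ with $M=V_4$: determining the exact image of the injection of Theorem~\ref{thm:injection:semi:direct} (realizability of every $\kappa$ and the size of the $\iota$-fibre) is the delicate point, closely followed by the bookkeeping needed to verify that the parity and quotient-matching constraints genuinely eliminate all spurious degenerate configurations.
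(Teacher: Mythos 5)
Your proposal is correct and takes essentially the same route as the paper's proof: the same invariants $B_i,C_i,M_i,E_i=\pi_i(H)$, the classification of (Abelian) normal subgroups of $S_n$, Theorem~\ref{thm:correspondence-degenerate-two-factor-injective} for the degenerate contributions (diagonals, $\leftmset 2,n,n\rightmset$, $\leftmset 3,4,4\rightmset$) and Theorem~\ref{thm:injection:semi:direct} with the same fibre counts $[S_n:M]\in\{1,2,6\}$ for the non-degenerate cases $n\in\{2,3,4\}$, yielding identical totals. The only cosmetic differences are that you split on $M$ trivial versus nontrivial (with a parity argument around the triangle $B_i\cong C_{i+1}$), whereas the paper cases on $E_1\in\{1,V,A_{n_1},S_{n_1}\}$ using $S_{n_1}/E_1\cong S_{n_2}/E_2\cong S_{n_3}/E_3$, and that the paper treats $n_1=n_2=n_3=2$ via Theorem~\ref{thm:correspondence-degenerate-non-diagonal} rather than your $K=1$ instance of the injection theorem.
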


\begin{proof}
 Suppose without loss of generality that $n_1 \geq n_2 \geq n_3$.
 Let~$\Delta \leq S_{n_1}\times S_{n_2}\times S_{n_3}$ be a $2$-factor injective subdirect product.
 Define~$H_1,H_2,H_3,H$ as in Section \ref{subsec:three-factors}.
 Let~$E_i = \pi_i(H)$.
 Then~$H \unlhd \Delta$ and~$E_i \unlhd S_{n_i}$ for~$i \in \{1,2,3\}$.
 By Theorem~\ref{thm:characterization-two-factor-injective}, it holds that~$S_{n_1}/E_1 \cong S_{n_2}/E_2 \cong S_{n_3}/E_3$.
 Also, by Theorem \ref{thm:correspondence-degenerate-non-diagonal}, for~$H$ we get a canonical tuple~$(B_1,B_2,B_3,C_1,C_2,C_3,\varphi_1,\varphi_2,\varphi_3)$ with subgroups $B_i,C_i \unlhd E_i$, such that $B_i \cap C_i$ is Abelian and $B_iC_i = E_i$.
 We obtain the following options.
 \begin{itemize}
  \item If~$E_1 = 1$ then $n_1 = n_2 = n_3$ and $E_2 = E_3 = 1$. We have $B_i = C_i = 1$ for $i \in \{1,2,3\}$
   In this case $H = 1$ and $\Delta$ is degenerate.
   By Theorem \ref{thm:correspondence-degenerate-two-factor-injective} there are $i(n_1)^{2}$ groups of this type, where $i(n_1)$ is the number of isomorphisms from $S_{n_1}$ to $S_{n_1}$.
   This corresponds to the choices for $\varphi_1$ and $\varphi_2$. For~$\varphi_3$ we get $\varphi_3^{-1} = \varphi_1 \circ \varphi_2$.
  \item If~$E_1 = V$ then $n_1 = 4$ and $S_{n_1}/E_1 \cong S_3$. In this case $\{n_1,n_2,n_3\} \subseteq \{3,4\}$.
  Let us first consider the case that~$n_3 = 3$. Then~$B_3 = C_3 = E_3 = 1$ and we can thus apply Theorem~\ref{thm:correspondence-degenerate-two-factor-injective}. 
  By Lemma \ref{lem:canonical:iso:between:groups} we conclude that $C_1 = B_2 = 1$ and $E_1 = B_1 = C_2 = V$.
  Using the correspondence given in Theorem \ref{thm:correspondence-degenerate-two-factor-injective} the number of such groups equals the number of pairs $(\varphi_1,\varphi_2)$, where $\varphi_1$ is an isomorphism from $S_4$ to $S_4$ and $\varphi_2$ is an isomorphism from $S_3$ to $S_3$.
  There are~$144$ such pairs.
  
  Next let us consider the case~$n_1 = n_2 = n_3 = 4$.
  This implies that~$B_i = C_i = E_i = V$ for all~$i\in \{1,2,3\}$. Since~$S_4$ is the semidirect product~$V\rtimes S_3$, we can apply Theorem~\ref{thm:injection:semi:direct}. 
  For every isomorphism~$\kappa$ from~$G_2$ to~$G_3$ we can find a subdirect product realizing~$\kappa$ by setting~$\Delta = \langle \{k,g_1,\kappa(g_1)\mid g_1\in G_1, k\in S_3\cap g_1V \} \cup \{(a,a^{-1},1)\mid a\in V\}\rangle $. Thus, by Theorem~\ref{thm:injection:semi:direct} the number of such subdirect products is equal to the number pairs~$(\kappa, \iota)$ where~$\kappa$ is an isomorphism from~$S_{4}$ to~$S_{4}$ and~$\iota$ is an automorphism of~$S_4$ that fixes~$V$ as a set. There are~$6n_1! = 144$ such pairs.
  \item If~$E_1 = A_{n_1}$ (and $n_1 \geq 3$) then $E_2 = A_{n_2}$ and $E_3 = A_{n_3}$.
   By applying Theorem \ref{thm:correspondence-degenerate-non-diagonal} to~$H$ it follows that either $n_1 = n_2 = n_3 = 3$ or $n_1 = n_2 > n_3 = 2$.
   In the former case $B_i = C_i = A_3 = \mathbb{Z}_3$ for $i \in \{1,2,3\}$ and in total there are $2n_1! = 12$ groups of this type by Theorem~\ref{thm:injection:semi:direct} (by the same arguments as in the previous case).
   In the latter case $B_1 = C_2 = A_{n_1}$ and $C_1 = B_2 = B_3 = C_3 = 1$.
   So $\Delta$ is degenerate and by Theorem \ref{thm:correspondence-degenerate-two-factor-injective} there are in total $i(n_1)\cdot i(2) = i(n_1)$ groups of this type, where again $i(n_1)$ is the number of isomorphisms from $S_{n_1}$ to $S_{n_1}$.
  \item If~$E_1 = S_{n_1}$ then $E_2 = S_{n_2}$ and $E_3 = S_{n_3}$.
   In this case $H = \Delta$.
   Using the correspondence described in Theorem \ref{thm:correspondence-degenerate-non-diagonal} we get that $n_1 = n_2 = n_3 = 2$ and $B_i = C_i = S_2$ for $i \in \{1,2,3\}$.
   Since there is only one isomorphism from $S_2$ to $S_2$ there is exactly one option in this case, namely the group given in Example \ref{ex:cfi-group} for $G_1 = S_2$.
 \end{itemize}
\end{proof}

\begin{corollary}
 Let $n_1 \geq n_2 \geq n_3 \geq 2$, $n_1 \geq 5$. For the number~$\ell(n_1,n_2,n_3)$ of subdirect products of~$S_{n_1}\times S_{n_2}\times S_{n_3}$ we have~\[\ell(n_1,n_2,n_3) = \begin{cases}
(n_1!)^{2} + 6n_1! + 6 &  \text{ if } n_1 = n_2 = n_3 \notin \{6\},\\
2082246 &  \text{ if } n_1 = n_2 = n_3 = 6,\\
66 &  \text{ if } n_2 = n_3 = 4,\\
18 &  \text{ if } n_2 \in\{3,4\}, n_3 = 3,\\
2886 & \text{ if } \leftmset n_1,n_2,n_3\rightmset = \leftmset 6,6,m_2\rightmset,m_2 \neq 6,\\
2m_1! + 6 & \text{ if } \leftmset n_1,n_2,n_3\rightmset = \leftmset m_1,m_1,m_2\rightmset, m_1 \neq m_2, 6 \neq m_1 \geq 5,\\
6 &  \text{ otherwise.}\\ 
\end{cases}\]
\end{corollary}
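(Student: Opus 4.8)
The plan is to substitute the three preceding lemmas into the second identity of Lemma~\ref{la:counting-subdirect-with-2-factor-injective},
\[
\ell(n_1,n_2,n_3) = \ell(n_1,n_2)+\ell(n_2,n_3)+\ell(n_1,n_3)-2+\sum_{N_i\lhd S_{n_i}}\ell_{2\text{-inj}}(S_{n_1}/N_1,S_{n_2}/N_2,S_{n_3}/N_3),
\]
where the sum ranges over triples of proper normal subgroups. The two-factor terms $\ell(\cdot,\cdot)$ are read off directly from Lemma~\ref{la:counting-2-factor-sn}, so the actual work is to evaluate the sum using Lemma~\ref{la:counting-3-factor-sn-2-factor-injective}.

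To that end I would first recall the normal subgroup structure of symmetric groups together with the fact that every proper quotient $S_n/N$ is again a symmetric group: for $n\geq 5$ the proper normal subgroups are exactly $1$ and $A_n$, giving quotients $S_n$ and $S_2$; for $n=4$ we have $1,V,A_4$ with quotients $S_4,S_3,S_2$; for $n=3$ we have $1,A_3$ with quotients $S_3,S_2$; and for $n=2$ only $1$ with quotient $S_2$. In each case the assignment $N_i\mapsto \deg(S_{n_i}/N_i)$ is a bijection onto the listed degrees, so the sum becomes a finite sum over admissible triples of quotient-degrees, each weighted by the value of $\ell_{2\text{-inj}}$ from Lemma~\ref{la:counting-3-factor-sn-2-factor-injective}. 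Crucially, that value is symmetric in its arguments and vanishes unless the multiset of degrees is all-equal, $\leftmset 2,n,n\rightmset$, $\leftmset 3,4,4\rightmset$, or $\leftmset 2,6,6\rightmset$.

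The decisive structural observation, which the hypothesis $n_1\geq 5$ is tailored to supply, is that $S_{n_1}$ contributes the unique strictly largest degree and has only the proper quotients $S_{n_1}$ and $S_2$. If $N_1=1$, then the degree $n_1\geq 5$ appears exactly once in the degree-multiset, since no other factor can quotient onto a symmetric group of degree $\geq n_1$; hence that multiset is never one of the admissible nonzero ones. So only $N_1=A_{n_1}$, i.e.\ quotient-degree $2$, can produce a nonzero summand whenever a genuine three-way interaction is present. This collapses every surviving summand to the shape $\ell_{2\text{-inj}}(2,c_2,c_3)$ with $c_2\leq n_2$ and $c_3\leq n_3$, and the admissible nonzero possibilities reduce to $\leftmset 2,2,2\rightmset$, $\leftmset 2,n,n\rightmset$ and $\leftmset 2,6,6\rightmset$, which can then be enumerated by hand.

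With this reduction in place I would run through the cases exactly as they appear in the statement: all three degrees equal (splitting off $n=6$ because of both $i(6)=2\cdot 6!$ and the $\leftmset 2,6,6\rightmset$ anomaly), $n_2=n_3=4$, $n_2\in\{3,4\}$ with $n_3=3$, the multisets $\leftmset 6,6,m\rightmset$ and $\leftmset m_1,m_1,m_2\rightmset$, and finally the generic ``otherwise'' case (all degrees distinct apart from the excluded triple $(n_1,4,3)$, or $n_2=n_3=2$). In each case the two-factor part is read off from Lemma~\ref{la:counting-2-factor-sn}, the surviving three-way summands are the $\leftmset 2,2,2\rightmset$ term (always contributing $2$) together with at most one further admissible term, and summing reproduces the claimed value; for instance the all-equal case $n\notin\{6\}$ yields $\big(3(n!+2)-2\big)+\big((n!)^2+3\cdot n!+2\big)=(n!)^2+6\cdot n!+6$. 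The only genuine obstacle is the bookkeeping: one must keep track that $N_i\mapsto\deg(S_{n_i}/N_i)$ is a bijection, so that each admissible degree-triple is counted with the correct multiplicity; that $\ell_{2\text{-inj}}$ is evaluated on the right multiset after invoking its symmetry; and that the degree-$4$ anomaly ($S_4\twoheadrightarrow S_3$ via $V$, together with $\leftmset 3,4,4\rightmset$) is correctly excluded under $n_1\geq 5$. The remaining arithmetic, such as $(2\cdot 6!)^2+3\cdot 1440+2$ in the $n=6$ case, is then routine.
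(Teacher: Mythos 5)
Your overall route is the same as the paper's: both substitute the quotient structure of symmetric groups (every proper quotient of $S_n$ is again a symmetric group, with quotient degrees $\{n,2\}$ for $n \geq 5$, $\{4,3,2\}$ for $n=4$, $\{3,2\}$ for $n=3$, $\{2\}$ for $n=2$) into the second identity of Lemma~\ref{la:counting-subdirect-with-2-factor-injective} and then evaluate the surviving summands with Lemmas~\ref{la:counting-2-factor-sn} and~\ref{la:counting-3-factor-sn-2-factor-injective}; the only cosmetic difference is that the paper absorbs your explicit $-2$ against the ever-present summand $\ell_{2\text{-inj}}(2,2,2)=2$, whereas you carry both terms, and your sample computations (e.g.\ $\bigl(3(n!+2)-2\bigr)+\bigl((n!)^2+3n!+2\bigr)=(n!)^2+6n!+6$ and the $n=6$ arithmetic) agree with the paper's case list.

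However, your ``decisive structural observation'' is false as stated, because the hypothesis is $n_1 \geq n_2$, not $n_1 > n_2$. You claim that if $N_1 = 1$ then the degree $n_1$ appears exactly once in the degree multiset (``no other factor can quotient onto a symmetric group of degree $\geq n_1$''), so that only $N_1 = A_{n_1}$ can yield a nonzero three-way summand, collapsing everything to the shape $\ell_{2\text{-inj}}(2,c_2,c_3)$. When $n_1 = n_2$ the factor $S_{n_2}$ quotients onto $S_{n_1}$ via $N_2 = 1$, and summands with $N_1 = 1$ do survive: in the all-equal case these are $\ell_{2\text{-inj}}(n_1,n_1,n_1)=(n_1!)^2$ and two of the three copies of $\ell_{2\text{-inj}}(2,n_1,n_1)$, and in the case $n_1 = n_2 > n_3$ the nonzero term $\ell_{2\text{-inj}}(2,n_1,n_1)$ arises precisely from $N_1 = N_2 = 1$, $N_3 = A_{n_3}$. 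Taken literally, your reduction would produce $n_1!+2$ instead of $(n_1!)^2 + 3n_1! + 2$ for the three-way part when $n_1=n_2=n_3$. Your final paragraph silently abandons the claim (the worked all-equal computation correctly includes the $(n!)^2$ and $3n!$ terms, and your phrase ``at most one further admissible term'' is likewise off there, since two multiset types occur, one with multiplicity three), so the numbers you report are right; the repair is to restrict the observation to factors whose degree is strictly largest, or to state what $n_1 \geq 5$ actually buys, namely that the degree-$4$ anomalies ($S_4 \twoheadrightarrow S_3$ and the multiset $\leftmset 3,4,4\rightmset$) cannot involve the first factor. With that correction the rest of your case analysis goes through and coincides with the paper's proof.
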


\begin{proof}
 Using Lemma \ref{la:counting-subdirect-with-2-factor-injective} and \ref{la:counting-3-factor-sn-2-factor-injective} we get
 \[\ell(n_1,n_2,n_3) = \sum_{i < j}\ell(n_{i},n_{j}) + \begin{cases}
\ell_{2\text{-inj}}(n_1,n_1,n_1) + 3\ell_{2\text{-inj}}(2,n_1,n_1)   &  \text{if } n_1 = n_2 = n_3,\\
\ell_{2\text{-inj}}(2,4,4) + \ell_{2\text{-inj}}(2,3,3) &  \text{if } n_2 = n_3 = 4,\\
\ell_{2\text{-inj}}(2,3,3) &  \text{if } n_2 \in\{3,4\}, n_3 = 3,\\
\ell_{2\text{-inj}}(2,m_1,m_1) &\begin{array}{@{}ll}
                                   \text{if} &\leftmset n_1,n_2,n_3\rightmset = \\&\leftmset m_1,m_1,m_2\rightmset\text{ for}\\&m_1 \neq m_2, m_1 \geq 5,
                                  \end{array}\\
0 &  \text{otherwise.}\\ 
\end{cases}\]
 Then apply Lemma \ref{la:counting-2-factor-sn} and \ref{la:counting-3-factor-sn-2-factor-injective}.
\end{proof}

The finitely many cases not covered by the previous corollary are listed in Table \ref{table:couting-small-numbers}. These numbers were calculated using the Lemmas \ref{la:counting-subdirect-with-2-factor-injective}, \ref{la:counting-2-factor-sn} and \ref{la:counting-3-factor-sn-2-factor-injective}.
However, these numbers were also double-checked with the computer algebra system gap~\cite{GAP4}.

\begin{table}
 \centering
 \begin{tabular}{c||c|c|c|c}
  $n_1 = 4$ & $4$ & $3$ & $2$ & $1$ \\
  \hhline{=#=|=|=|=}
  $4$ & $1386$ & $282$ & $66$ & $32$ \\
  \hline
  $3$ & $282$ & $90$ & $18$ & $8$ \\
  \hline
  $2$ & $66$ & $18$ & $6$ & $2$ \\
  \hline
  $1$ & $32$ & $8$ & $2$ & $1$ \\
 \end{tabular}
 \quad
 \begin{tabular}{c||c|c|c}
  $n_1 = 3$ & $3$ & $2$ & $1$ \\
  \hhline{=#=|=|=}
  $3$ & $90$ & $18$ & $8$ \\
  \hline
  $2$ & $18$ & $6$ & $2$ \\
  \hline
  $1$ & $8$ & $2$ & $1$ \\
 \end{tabular}
 \quad
 \begin{tabular}{c||c|c}
  $n_1 = 2$ & $2$ & $1$ \\
  \hhline{=#=|=}
  $2$ & $6$ & $2$ \\
  \hline
  $1$ & $2$ & $1$ \\
 \end{tabular}
 \caption{The numbers $\ell(n_1,n_2,n_3)$ for $n_1,n_2,n_3 \in \{1,2,3,4\}$}
 \label{table:couting-small-numbers}
\end{table}

\paragraph{Acknowledgments.}
Funded by the Excellence Initiative of the German federal and state governments.

\bibliographystyle{abbrv}
\bibliography{literature}

\end{document}